\newcommand \al{\alpha}
\newcommand \on{\overline{\nabla}}
\newcommand \G{\Gamma}
\newcommand \la{\lambda}
\newcommand \br{\mathbb{R}}
\newcommand \bc{\mathbb{C}}
\newcommand \bh{\mathbb{H}}
\newcommand \bO{\mathbb{O}}
\newcommand \Span{\operatorname{Span}}
\newcommand \id{\operatorname{id}}
\newcommand \<{\langle}
\renewcommand \>{\rangle}
\newcommand \ip{\< \cdot, \cdot \>}
\newcommand \mS{\mathcal{S}}
\newcommand \mV{\mathcal{V}}
\newcommand \mM{\mathcal{M}}
\newcommand \Tr{\operatorname{Tr}}
\newcommand \tM{\overline{M}}
\newcommand \tR{\overline{R}}
\newcommand \gv{\mathfrak{v}}
\newcommand \gz{\mathfrak{z}}
\newcommand \ga{\mathfrak{a}}
\newcommand \gs{\mathfrak{s}}
\newcommand \gp{\mathfrak{p}}
\newcommand \gh{\mathfrak{h}}
\newcommand \rJ{\mathrm{J}}
\theoremstyle{plain}
\newtheorem{theorem}{Theorem}
\newtheorem*{theorem*}{Theorem}
\newtheorem*{corollary*}{Corollary}
\newtheorem*{conj*}{Conjecture}
\newtheorem{lemma}{Lemma}
\newtheorem{proposition}{Proposition}
\newtheorem*{prop*}{Proposition}
\theoremstyle{definition}
\newtheorem*{definition*}{Definition}
\theoremstyle{remark}
\newtheorem{remark}{Remark}
\begin{document}

\title{Einstein hypersurfaces of Damek-Ricci spaces}

\author{Sinhwi Kim}
\address{Department of Mathematics, Sungkyunkwan University, Suwon, 16419, Korea}
\email{kimsinhwi@skku.edu}

\author{Yuri Nikolayevsky}
\address{Department of Mathematics and Statistics, La Trobe University, Melbourne, Victoria, 3086, Australia}
\email{y.nikolayevsky@latrobe.edu.au}
% grant from which Jeong paid?

\author{JeongHyeong Park}
\address{Department of Mathematics, Sungkyunkwan University, Suwon, 16419, Korea}
\email{parkj@skku.edu}

\thanks {The second author was partially supported by ARC Discovery grant DP210100951.
The third author was supported by Samsung Science and Technology Foundation under Project Number SSTF-BA2001-03. }
%\thanks {The first and the third author were supported by the National Research Foundation of %Korea(NRF) grant funded by the Korea government(MSIT) (NRF-2019R1A2C1083957). \\
%\indent The second author was partially supported by ARC Discovery grant DP210100951.}
\subjclass[2010]{Primary 53C25, 53C30, 53B25; Secondary 53C35}
\keywords{Damek-Ricci space, Einstein hypersurface}

% 53C25  Special Riemannian manifolds (Einstein, Sasakian, etc.)
% 53B20  Local Riemannian geometry
% 53B25  Local submanifolds
% 53C30  Homogeneous manifolds
% 53C35  Symmetric spaces

%\date{}

\begin{abstract}
Einstein hypersurfaces are ``very rare" in rank-one symmetric spaces. Damek-Ricci spaces may be viewed as the closest and the most natural generalisations of noncompact rank-one symmetric spaces. We prove that no Damek-Ricci space admits an Einstein hypersurface.
\end{abstract}

\maketitle

\section{Introduction}
\label{s:intro}

The study of homogeneous manifolds is one of the main avenues in modern Riemannian geometry. In particular, the theory of homogeneous Einstein manifolds is a very active area, with many beautiful results and challenging conjectures. In comparison, the study of submanifolds of homogeneous spaces seems to be much less developed. In this paper, we investigate a question in the overlap of homogeneous geometry and submanifold geometry, the classification of Einstein hypersurfaces in Damek-Ricci spaces.

Einstein hypersurfaces in Riemannian manifolds are very rare (which is na\"{\i}vely suggested by a parameter count), to the extent that the correct question may be to classify (say homogeneous) spaces admitting an Einstein hypersurface, rather than to classify Einstein hypersurfaces in a given space or class of spaces. Nevertheless, an explicit classification of Einstein hypersurfaces, to the best of our knowledge, is only known for two-point homogeneous spaces. By \cite[Theorem~7.1]{Fia}, an Einstein hypersurface in a space of constant curvature is locally either totally umbilical, or totally geodesic, or is of conullity $1$, or is the product of two spheres of the same Ricci curvature in the sphere. The classification for rank-one symmetric spaces of non-constant curvature is summarised in the following theorem (where we assume that the metric is always normalised in such a way that the sectional curvature lies in $[\frac14, 1]$).

\begin{theorem}\label{th:rk1}
{\ }

  \begin{enumerate}[label=\emph{\arabic*.},ref=\arabic*]
    \item
    There are no Einstein (real) hypersurfaces in the complex projective space and in the complex hyperbolic space \cite[Theorem~4.3]{Kon}, \cite[Corollary~8.2]{Mon}, \cite[Theorem~8.69]{CR}.

    \item
    There are no Einstein (real) hypersurfaces in the quaternionic hyperbolic space \cite[Corollary~1]{OP}. A connected (real) hypersurface in $\mathbb{H}P^m, \, m \ge 2$, is Einstein if and only if it is a domain of a geodesic sphere of radius $r \in (0, \pi)$, where $\cos r = \frac{1-2m}{1+2m}$ \cite[Corollary~7.4]{MP}.

    \item
    There are no Einstein hypersurfaces in the Cayley hyperbolic plane. A connected hypersurface in the Cayley projective plane is Einstein if and only if it is a domain of a geodesic sphere of radius $r \in (0, \pi)$ such that $\cos r = -\frac{5}{11}$ \cite{KNP2}.
  \end{enumerate}
\end{theorem}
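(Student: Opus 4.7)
My plan is to assemble Theorem~\ref{th:rk1} from three separate arguments, one per item, each exploiting the extra geometric structure that a rank-one symmetric space of non-constant curvature induces on its real hypersurfaces. In every case the logic is: (i) use the Codazzi equation together with the Einstein condition to force the shape operator $A$ to have a very rigid form (``Hopf-type''); (ii) invoke the classification of Hopf(-type) hypersurfaces in the given ambient to reduce to a short list of model hypersurfaces; (iii) substitute the known principal curvatures of each model into $\Ric^M=\mu g$ and decide existence by an explicit algebraic equation in the radius or the defining parameter.

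For item~1 (the complex projective and hyperbolic spaces) I start from the almost contact metric structure $(\varphi,\xi,\eta,g)$ that a real hypersurface $M$ inherits from the ambient Kähler structure, with $\xi=-JN$ for $N$ a unit normal. The Gauss equation expresses $\Ric^M$ in terms of $A$ plus a correction coming from the constant holomorphic curvature $c$. Imposing $\Ric^M=\mu g$ and combining with the Codazzi equation $(\nabla_X A)Y-(\nabla_Y A)X = \tfrac{c}{4}(\eta(X)\varphi Y-\eta(Y)\varphi X - 2\<\varphi X,Y\>\xi)$, one extracts along $\xi$ the relation $A\xi=\alpha\xi$, i.e.\ $M$ is Hopf. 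Takagi's and Berndt's lists of Hopf hypersurfaces in $\bc P^n$ and $\bc H^n$ then leave only geodesic spheres, tubes over totally geodesic subspaces, and horospheres, and a direct check of principal curvatures shows that none of them is Einstein. This is exactly the route of~\cite{Kon,Mon,CR}.

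For item~2 the corresponding input is the almost contact $3$-structure $(\varphi_i,\xi_i,\eta_i)_{i=1,2,3}$ on a real hypersurface of $\bh P^m$ or $\bh H^m$ induced by the quaternion-Kähler structure of the ambient. The same Codazzi/Einstein interplay now forces $A$ to preserve $\Span(\xi_1,\xi_2,\xi_3)$ and to act on it as a multiple of the identity, so $M$ is curvature-adapted and Hopf in the quaternionic sense. Martinez-Perez' classification~\cite{MP} and its hyperbolic analogue~\cite{OP} reduce the candidates to tubes over $\bh P^k$ (resp.\ $\bh H^k$) and, in the compact case, geodesic spheres; the Einstein equation becomes a rational identity in the radius which has no solution in $\bh H^m$ and the unique solution $\cos r=(1-2m)/(1+2m)$ on geodesic spheres in $\bh P^m$.

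For item~3 (the Cayley planes) the almost contact framework is unavailable, but the ambient $\mathrm{Spin}(9)$-structure plays the same organising role: one decomposes $T_pM$ according to the isotropy representation of $\mathrm{Spin}(9)$ and uses the explicit form of the Cayley curvature tensor to turn Codazzi$+$Einstein into constraints on the shape operator relative to the eight-dimensional ``contact-like'' distribution induced by a unit normal. Following~\cite{KNP2}, this forces $M$ to be curvature-adapted and Hopf, whence an open subset of a principal $\mathrm{Spin}(9)$-orbit, i.e.\ a geodesic sphere or a tube over $\bO P^1=S^8$; a final numerical check leaves only $\cos r=-5/11$ in $\bO P^2$ and no solution in $\bO H^2$. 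The main obstacle throughout is step~(i): producing the Hopf-type rigidity from Einstein alone requires a careful simultaneous use of several components of Codazzi, and in the Cayley case also of the $\mathrm{Spin}(9)$-equivariant Clifford-like structure on the normal bundle. Since each item is already proved in the literature cited within the statement, a self-contained proof is not attempted here and Theorem~\ref{th:rk1} is recorded only for context.
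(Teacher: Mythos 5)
The paper does not actually prove Theorem~\ref{th:rk1}: it records it purely as a summary of known classifications, delegating all proofs to the cited references, which is exactly what you do, so your treatment matches the paper's. One small quibble with your sketch of the cited arguments: the Hopf/curvature-adapted property of an Einstein hypersurface in an Einstein ambient space already follows from the Gauss equation alone (cf.\ equation~\eqref{eq:Gaussinv} in Section~\ref{ss:Ein}, which shows the restricted Jacobi operator is a polynomial in the shape operator), so the Codazzi equation is not needed for your step~(i).
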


\begin{remark} \label{rem:err}
  An unfortunate typing error occurred in Example~1 of \cite{KNP2}. The correct multiplicities of the principal curvatures are $8$ for $\frac12 \cot \frac12 r$ and $7$ for $\cot r$ (not vice versa, as published). And then the equation in the last line of Example~1 should read $1+\cot^2 r - (7\cot r + 4 \cot \frac12 r) \cot r = \frac14 + \frac14 \cot^2 \frac12r - (7\cot r + 4 \cot \frac12 r) \frac12 \cot \frac12 r$. This does not affect the rest of the paper, including the value of $r_0$ given in Example~1 of \cite{KNP2}.
\end{remark}

In this paper, we study Einstein hypersurfaces of Damek-Ricci spaces. This class of spaces can be naturally considered as the closest generalisation of the class of noncompact rank-one symmetric spaces. A \emph{Damek-Ricci space} is a solvable Lie group with a left-invariant metric whose Lie algebra is obtained by a extending a generalised Heisenberg algebra $\mathfrak{n}$ by a derivation which acts as the identity on the centre $\gz$ of $\mathfrak{n}$ and as a $\frac12$ times the identity on $\gz^\perp$ (see Section~\ref{ss:DRspaces} for details). Any Damek-Ricci space has rank one (in the sense that any Cartan subalgebra of its Lie algebra is one-dimensional), and all noncompact rank-one symmetric spaces of non-constant curvature are Damek-Ricci. All Damek-Ricci spaces are Einstein solvmanifolds, and moreover, are \emph{harmonic} manifolds. One of many equivalent definitions states that a Riemannian space is harmonic if a punctured neighbourhood of any point admits a harmonic function which depends only on the distance to that point. Any \emph{homogeneous} harmonic space is either flat, or rank-one symmetric, or is a Damek-Ricci space by \cite{Sz,Heb}; whether or not there exist nonhomogeneous harmonic spaces is an open question (for the current state of knowledge see \cite{Kn} and references therein). For recent results and developments in geometry of Damek-Ricci spaces and their submanifolds we refer the reader to \cite{CMO, DD,Kol}.

By Theorem~\ref{th:rk1}, noncompact rank-one symmetric spaces of non-constant curvature admit no Einstein hypersurfaces. We prove that this is still the case for Damek-Ricci spaces.
\begin{theorem} \label{th:noEinDR}
  A Damek-Ricci space admits no Einstein hypersurfaces.
\end{theorem}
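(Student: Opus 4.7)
The plan is to combine the Gauss and Codazzi equations with the explicit form of the curvature tensor of a Damek-Ricci space. Let $M$ be an Einstein hypersurface, with Einstein constant $c$, of a Damek-Ricci space $\overline M$, which is itself Einstein with constant $\overline c$ determined by $\dim\gv$ and $\dim\gz$. At every point of $M$, fix a unit normal $N$ and let $\Sh$ be the shape operator. Tracing the Gauss equation and imposing the two Einstein conditions yields
\[
\overline R_N = (\overline c - c)\, I + H\, \Sh - \Sh^2,
\]
where $\overline R_N \colon N^\perp \to N^\perp$, $X \mapsto \overline R(X, N)N$, is the Jacobi operator and $H = \Tr \Sh$. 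In particular $\overline R_N$ and $\Sh$ commute, and on every eigenspace of $\overline R_N$ of eigenvalue $\mu$, the principal curvatures of $\Sh$ satisfy the quadratic $\kappa^2 - H\kappa + (\mu - \overline c + c) = 0$. Hence each eigenspace of $\overline R_N$ carries at most two distinct principal curvatures of $M$.

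Next I would exploit the explicit spectral structure of $\overline R_N$ on $\overline M$. Decompose the unit normal as $N = aA + V + Z$ with $A$ a fixed unit vector of $\ga$, $V \in \gv$, $Z \in \gz$ and $a^2 + |V|^2 + |Z|^2 = 1$. Using the known curvature formula for a Damek-Ricci space, expressed through the Clifford-like operators $J_Z \colon \gv \to \gv$, one writes $\overline R_N$ as an explicit sum of projections onto natural subspaces of $N^\perp$: the $\ga$-line, the line $\Span\{J_Z V\}$ when both $V$ and $Z$ are nonzero, the rest of the $J_\gz$-orbit of $V$, the $\gv$-complement of these, and the orthogonal complement of $Z$ in $\gz$. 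For $N$ in generic position this yields four distinct eigenvalues of $\overline R_N$ with prescribed multiplicities depending on $\dim\gv$ and $\dim\gz$. Combined with the quadratic constraint of the previous step, this forces the eigenspaces of $\Sh$ to refine those of $\overline R_N$, splitting each of them into at most two pieces; a dimension count then imposes sharp algebraic restrictions on the principal curvatures of $M$ and their multiplicities.

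The last step is to plug the resulting algebraic form of $\Sh$ into the Codazzi equation $(\on_X \Sh) Y - (\on_Y \Sh) X = \bigl( \overline R(X, Y) N \bigr)^\top$ and use the explicit bracket structure of $\gv$ and $\gz$ to derive differential identities for the principal curvatures of $M$. I expect these identities to be inconsistent for every admissible pair $(\dim\gv,\dim\gz)$, producing the desired contradiction. The main obstacle is the lack of two-point homogeneity of $\overline M$: unlike in the rank-one symmetric situation, the configuration of $N$ within $\ga \oplus \gv \oplus \gz$ varies along $M$ and cannot be normalised by an ambient isometry. One therefore either argues at a point where $N$ is in generic position and extends the conclusion by continuity and analyticity, or treats separately the degenerate configurations $V = 0$, $Z = 0$, or $a = 0$, verifying in each that the combined algebraic and Codazzi constraints admit no solution.
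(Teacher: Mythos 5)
Your skeleton does match the paper's overall strategy: the traced Gauss equation gives $(\tR_\xi)_{|T_xM} = -S^2 + HS + C\,\id_{T_xM}$, so the hypersurface is curvature-adapted and each eigenspace of the Jacobi operator carries at most two principal curvatures; one then combines the explicit spectrum of $\tR_\xi$ with the Codazzi equation, treating the degenerate positions of the normal ($V=0$, $Y=0$, $s=0$) separately. The degenerate cases do go through essentially as you envisage (each still needs its own computation, e.g.\ for $s=0$ one first shows $Y=0$ and then gets the contradiction $\<\tR_\xi Z,Z\>=-\tfrac12\neq-\tfrac14$). But in the general case your plan has a substantive error and a genuine gap. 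The error: the spectrum of $\tR_\xi$ in generic position is \emph{not} a fixed list of four eigenvalues with eigenspaces given by the $\ga$-line, $\Span\{J_ZV\}$, the rest of the $J_{\gz}$-orbit of $V$, and so on. By \cite[Theorem~4.2(vi)]{BTV} it is governed by the skew-symmetric operator $K_{V,Y}X=\|V\|^{-2}\|Y\|^{-1}[V,J_XJ_YV]$ on $Y^\perp\cap\gz$: besides $-1$ and $-\tfrac14$, each eigenvalue $\mu\neq-1$ of $K_{V,Y}^2$ contributes three eigenvalues of $\tR_\xi$ that are roots of a cubic, with eigenspaces that mix $\gz$- and $\gv$-directions nontrivially. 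The number of eigenvalues is not bounded a priori, and the entire difficulty of the non-symmetric case is concentrated in controlling the spectrum of $K_{V,Y}^2$.

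The gap: the concluding step ``I expect these identities to be inconsistent for every admissible pair'' is not an argument, and it is precisely here that the real work lies. In the paper, before the Codazzi analysis can even start one must prove that the distinguished vector $T^0\in L_{-1}$ is an eigenvector of $S$ and that $K_{V,Y}^2$ has an eigenvalue different from $-1$; this requires totally geodesic rank-one symmetric subgroups and the duality property of their Jacobi operators, a Clifford-module dimension count that reduces the problematic configurations to $(d_{\gz},d_{\gv})=(5,8),(6,8),(7,16)$, and an explicit octonionic computation to exclude $(8,16)$. Only then does a lengthy symmetric-function computation with the roots of $t^2(t+3)=q$ yield the final contradiction. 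None of this is visible from your outline, and the ``generic position plus continuity/analyticity'' remark does not substitute for it, since the obstruction is algebraic (the structure of $K_{V,Y}$) rather than a matter of genericity of $N$.
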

% say that non-flat (non-abelian), or self explanatory?

\section{Preliminaries}
\label{s:pre}

\subsection{Einstein condition}
\label{ss:Ein}

Let $\tM$ be an Einstein Riemannian manifold of dimension $n+1 \ge 3$ and let $M$ be an Einstein hypersurface of $\tM$. Denote $\ip$ the metric tensor on $\tM$ and the induced metric tensor on $M$. Denote $\on, \tR$ and $\nabla, R$ the Levi-Civita connection and the curvature tensor of $\tM$ and of $M$ respectively; for $x \in \tM$ and $X \in T_x\tM$, the Jacobi operator $\tR_X$ is defined by $\tR_XY=\tR(Y,X)X$ for $Y \in T_x\tM$. Let $\xi$ be a unit normal vector field of $M$. For $x \in M$ we define the shape operator $S$ on $T_xM$ by $SX=-\on_X \xi$, so that $\<\on_XY,\xi\>=\<SX,Y\>$, where $X, Y \in T_xM$. We will work on a small open, connected subset $\mM$ of $M$ on which the multiplicities of the eigenvalues of $S$ are constant. On $\mM$, we can choose a smooth orthonormal frame $X_i$ of eigenvectors of $S$, with the corresponding eigenvalues (principal curvatures) $\la_i, \; i=1, \dots, n$. Denote $H = \Tr S = \sum_{i=1}^n \la_i$ the mean curvature of $M$.

% maybe say that id without subscript when clear from context
By Gauss equation, $\tR(X_i,X_k,X_k,X_j) = R(X_i,X_k,X_k,X_j) + (\la_k^2 \delta_{ik}\delta_{jk}-\la_i \la_k\delta_{ij})$. Summing up by $k$ we obtain
  \begin{gather}\label{eq:Gauss}
    \<\tR_\xi X_i, X_j\>=\alpha_i \delta_{ij}, \quad \text{where } \\
    \alpha_i=-\la_i^2 + H \la_i + C, \label{eq:Gaussal}
  \end{gather}
and where $C$ is the difference of the Einstein constants of $\tM$ and of $M$, and $\alpha_i$'s are the eigenvalues of the restriction of $\tR_\xi$ to $T_xM$. Equivalently, \eqref{eq:Gauss} can be written as
  \begin{equation}\label{eq:Gaussinv}
    (\tR_\xi)_{|T_xM} = -S^2 + HS + C \id_{T_xM}.
  \end{equation}
For an eigenvalue $\al$ of the restriction of $\tR_\xi$ to $T_xM$, denote $L_{\al}$ the corresponding eigenspace. Then for every $\al$, the eigenspace $L_{\al}$ is $S$-invariant (hypersurfaces with this property are called \emph{curvature-adapted}) and
  \begin{equation}\label{eq:Gaussinval}
    S_{|L_{\al}}^2 - HS_{|L_{\al}} + (\al-C) \id_{L_{\al}} = 0.
  \end{equation}

Codazzi equation takes the form %separate?
  \begin{equation}\label{eq:codazzi}
    \tR(X_k,X_i,X_j,\xi)  = \delta_{ij} X_k(\la_j) - \delta_{kj} X_i(\la_j) + (\la_i-\la_j) \G_{ki}^{\hphantom{k}j} - (\la_k-\la_j) \G_{ik}^{\hphantom{i}j},
  \end{equation}
where $\G_{ki}^{\hphantom{k}j}=\<\nabla_k X_i, X_j\>$ (note that $\G_{kj}^{\hphantom{k}i}=-\G_{ki}^{\hphantom{k}j}$).

Differentiating equation~\eqref{eq:Gauss} in the direction of $X_k$ we obtain
\begin{gather}\label{eq:dG1nR}
  (\on_k \tR)(X_i,\xi,\xi,X_i) = X_k(\al_i)+2 \la_k \<\tR_{X_i}\xi, X_k\>,  \\
  \G_{ki}^{\hphantom{k}j} (\alpha_j - \alpha_i) = \la_k (\tR(\xi,X_j,X_i,X_k) + \tR(\xi,X_i,X_j,X_k)) -(\on_k \tR)(X_i,\xi,\xi,X_j),  \label{eq:dG2nR}
\end{gather}
where $i \ne j$ (here and below we abbreviate $\on_{X_k}$ to $\on_k$).

Let the ambient space $\tM$ be a Lie group with a left-invariant metric. For a vector field $T$ (defined at $x$), we denote $\widetilde{T}$ the left-invariant vector field on $\tM$ such that $\widetilde{T}(x)=T(x)$. Computing the covariant derivative of the curvature tensor for left-invariant vector fields we can write \eqref{eq:dG1nR} and \eqref{eq:dG2nR} in the following form, respectively:
\begin{gather}\label{eq:dG1}
\<\tR_{X_i}\xi, \on_k \widetilde{\xi} + \la_k X_k\>= -\tfrac12 X_k(\alpha_i), \\
\G_{ki}^{\hphantom{k}j} (\alpha_j - \alpha_i) = \<\tR(\xi,X_j)X_i + \tR(\xi,X_i)X_j,\on_k \widetilde{\xi} + \la_k X_k\> +  (\alpha_j - \alpha_i) \<\on_k \widetilde{X_i}, X_j\>, \label{eq:dG2}
\end{gather}
where $i \ne j$. Note that the vector $\on_k \widetilde{\xi} + \la_k X_k$ is the derivative of the \emph{Gauss map} of $M$ in the direction of $X_k$ \cite[Proposition~3]{Rip}.

\subsection{Damek-Ricci spaces}
\label{ss:DRspaces}

Let $(\mathfrak{n}, \ip)$ be a metric, two-step nilpotent Lie algebra with the centre $\gz$ and with $\gv=\gz^\perp$. For $Z \in \gz$, define $J_Z \in \mathfrak{so}(\gv)$ by $\<J_ZU,V\>=\<[U,V],Z\>$ for $U,V \in \gv$. The metric algebra $(\mathfrak{n}, \ip)$ is called a \emph{generalised Heisenberg algebra} if for all $Z \in \gz$, we have $J_Z^2=-\|Z\|^2 \id_{\gv}$. Note that $\gv$ is a Clifford module over the Clifford algebra $\mathrm{Cl}(\gz, -\ip_{\gz})$. Consider a one-dimensional extension $\gs=\mathfrak{n} \oplus \ga$ of a generalised Heisenberg algebra $\mathfrak{n}$, where $\ga = \br A$ and $[A,U]=\frac12 U, \; [A,Z]=Z$ for $U \in \gv, \; Z \in \gz$, and extend the inner product from $\mathfrak{n}$ to $\gs$ in such a way that $A \perp \mathfrak{n}$ and $\|A\|=1$. Then $\gs$ is a metric, solvable Lie algebra. The corresponding simply connected Lie group $\mS$ with the left-invariant metric defined by $\ip$ is called a \emph{Damek-Ricci space}. Note that the hyperbolic space can be obtained by a similar construction, starting with an abelian algebra $\mathfrak{n}$; it is conventional to exclude this case hence assuming that both $\gz$ and $\gv$ have positive dimension.

\begin{remark} \label{rem:dim}
We denote $d_\gz = \dim \gz$ and $d_\gv = \dim \gv$. From the representation theory of Clifford modules we know that if $d_\gv=2^{4a+b}c$, where $0 \le b \le 3$ and $c$ is odd, then $d_\gz \le 8a+2^b-1$.

% with $d_\gz, d_\gv > 0$
A Damek-Ricci space is symmetric in the following cases: $d_\gz=1$, $(d_\gz, d_\gv)=(7,8)$, and $(d_\gz, d_\gv)=(3,4m)$, when all the irreducible $4$-dimensional $\mathrm{Cl}(\gz)$-submodules of $\gv$ are isomorphic (that is, when $J_{Z_1} J_{Z_2} J_{Z_3} =\pm \id_{\gv}$ for an orthonormal basis $\{Z_1, Z_2, Z_3\}$ for $\gz$). The corresponding Damek-Ricci space is rank-one symmetric and is isometric to the complex hyperbolic space, the Cayley hyperbolic plane and the quaternionic hyperbolic space respectively.
\end{remark}

Let $T_1, T_2 \in T_x\mS$, with $T_1=V+Y+sA, \; T_2 = U+X+rA$, where $V,U \in \gv, \, Y, X \in \gz$ and $r,s \in \br$ (we identify $T_x\mS$ with $\gs$ via left translation). Then by \cite[\S 4.1.8, \S 4.1.6]{BTV}, for the Jacobi operator of $\mS$ at $x$ and the covariant derivative we have respectively
\begin{align}\label{eq:Jac} % maybe give a general formula for R?
  &
  \begin{aligned}
    \tR_{T_1}T_2 & = \tfrac34 J_XJ_YV + \tfrac34 J_{[U,V]}V + \tfrac34 r J_YV -\tfrac34 s J_XV -\tfrac14 \|T_1\|^2 U + (\tfrac34 \<X, Y\> + \tfrac14 \<T_1,T_2\>)V \\
     & -\tfrac34 [U, J_YV] + \tfrac34 s [U,V] - (\|T_1\|^2 - \tfrac34 \|V\|^2) X + \<T_1,T_2\>Y \\
     & +(\tfrac34 \<U,J_YV\> - r(\|T_1\|^2 - \tfrac34 \|V\|^2) + s (\<T_1,T_2\> - \tfrac34 \<U,V\>))A,
  \end{aligned}
  \\
  &\on_{T_1} \widetilde{T_2} = -\tfrac12 J_XV -\tfrac12 J_YU - \tfrac12 r V -\tfrac12 [U,V] - r Y + \tfrac12 \<U,V\>A + \<X,Y\>A, \label{eq:nabla}
\end{align}
where, as above, for a vector $T \in T_x\mS$ (or a vector field $T$ on a neighbourhood of $x$) we denote $\widetilde{T}$ the left-invariant vector field such that $\widetilde{T}(x)=T(x)$.

In a generalised Heisenberg Lie algebra we have the following identities:
\begin{equation*}
  [V,J_YV] = \|V\|^2 Y, \quad [V,J_YU] - [J_YV,U] = 2 \<U, V\> Y, \qquad \text{for } U, V \in \gv, \, Y \in \gz.
\end{equation*}

Following \cite[\S 3.1.12]{BTV}, for nonzero vectors $V \in \gv$ and $Y \in \gz$ we define the operator $K_{V,Y}$ on the subspace $Y^\perp \cap \gz$ by
\begin{equation}\label{eq:defK}
K_{V,Y}X =  \|V\|^{-2}\|Y\|^{-1} [V, J_XJ_YV].
\end{equation}
The operator $K_{V,Y}$ is skew-symmetric, with all the eigenvalues of $K_{V,Y}^2$ lying in $[-1, 0]$. Furthermore,
\begin{equation}\label{eq:K2-1}
  K_{V,Y}^2X = - X \; \Leftrightarrow \; J_XJ_YV=\|Y\|J_{K_{V,Y}X}V.
\end{equation}

% curvature tensor of HH here? copy and paste from the proof in prop2?
\begin{remark} \label{rem:tg}
  On several occasions, we will use the following argument. Let $\gh=\ga \oplus \gv' \oplus \gz'$ be a subalgebra of $\gs$ such that $J_{\gz'} \gv' \subset \gv'$, where $\gv' \subset \gv$ and $\gz' \subset \gz$. Then by \cite{Rou}, the corresponding subgroup of $\tM$ is totally geodesic and is a ``smaller" Damek-Ricci space or the real hyperbolic space (when $\gv'=0$ or $\gz'=0$). Note that by \cite{KNP1}, this construction gives ``almost all" totally geodesic submanifolds of Damek-Ricci spaces. In particular, $\gh$ is closed under $\tR$ and $\on \, \tR$ (that is, $\tR(\gh,\gh)\gh, \, (\on_{\gh} \tR)(\gh,\gh)\gh \subset \gh$) and also $\on_{\gh} \widetilde{T} \subset \gh$ for a left-invariant $\widetilde{T} \in \gh$.

  Moreover, if that totally geodesic subgroup is a symmetric space, then it is rank-one symmetric (see~Remark~\ref{rem:dim}). Then we additionally have $(\on_{\gh} \tR)(\gh,\gh)\gh=0$, and also the following ``duality" property: if unit vectors $T_1, T_2 \in \gh$ are such that $T_2$ is an eigenvector of the Jacobi operator $\tR_{T_1}$, then $T_1$ is an eigenvector of the Jacobi operator $\tR_{T_2}$, with the same eigenvalue.
\end{remark}

\section{Proof of Theorem~\ref{th:noEinDR}}
\label{s:proof}

Let $\tM$ be a Damek-Ricci space and $M$ be an Einstein hypersurface in $\tM$. We adopt the notation of Section~\ref{s:pre}. We will work on a small open, connected neighbourhood $\mM \subset M$, and from now on, will replace $M$ by $\mM$. Let $\xi =V+Y+sA$ be a unit normal vector field of $M$, where $V$ and $Y$ lie in the left-invariant subbundles $\gv$ and $\gz$ respectively, and $s$ is a real function on $M$. We can assume that on $M$, the shape operator $S$, the Jacobi operator $\tR_\xi$ and  the operator $K_{V,Y}^2$ (defined by \eqref{eq:defK}) have constant number of pairwise distinct eigenvalues (and then the multiplicities of corresponding eigenvalues are also constant).

We split the proof of Theorem~\ref{th:noEinDR} into two cases. In Section~\ref{ss:nongen} we consider the ``special" cases, when one of the components $V, Y$ or $sA$ of $\xi$ is locally zero. The proof in the ``general" case, when all three are locally nonzero, is given in Section~\ref{ss:gen}.

\subsection{Special cases}
\label{ss:nongen}
In this section, we consider the cases when one of the components $V, Y$ or $sA$ of the unit normal vector $\xi=V+Y+sA$ is zero at all points of $M$.

First suppose that $\xi$ has no $\gv$-component. Then $TM$ contains the left-invariant subbundle $\gv$, and so, by Frobenius Theorem, it also contains $[\gv, \gv]=\gz$. It follows that $TM = \gv \oplus \gz$, and so $M$ is a domain on a nilpotent group, the generalised Heisenberg group from which the Damek-Ricci space has been constructed (see Section~\ref{ss:DRspaces}). But the latter is never Einstein \cite[\S~3.1.7]{BTV} (and in general, a nilpotent group with a left-invariant metric can only be Einstein if it is abelian \cite[Theorem~2.4]{Mil}).

Now suppose that on $M$, the unit normal vector field $\xi$ has no $A$-component. Then $\xi=V+Y$ (and we may assume that $V\ne 0$) and $TM$ contains the left-invariant vector field $A$. Let $T=U+X+rA$ be a tangent vector field on $M$, where $U \in \gv, \; X \in \gz, \; r\in\mathbb{R}$. Then by \eqref{eq:nabla} we have $\<\on_T A,\xi\>=\<-\frac12 V - Y, T\>$, and so $SA=\frac12\|Y\|^2V-\frac12\|V\|^2Y$. Note that by \eqref{eq:nabla}, $\on_A \gv \subset \gv$ and $\on_A \gz \subset \gz$. It follows that $\<\on_A \xi,SA\>= \frac12\|Y\|^2\<\on_AV,V\>-\frac12\|V\|^2 \<\on_A Y,Y\>= \frac14 (\|Y\|^2A(\|V\|^2)-\|V\|^2A(\|Y\|^2))= \frac14 A(\|V\|^2)$. On the other hand, $\<\on_A \xi,SA\>= -\|SA\|^2=-\frac14 \|Y\|^2\|V\|^2$, and so $A(\|V\|^2) = -\|Y\|^2\|V\|^2$. Now from \eqref{eq:Gaussinv} we obtain $\<\tR_\xi A, A\>= -\|SA\|^2 + H \<SA,A\> + C=C-\frac14 \|Y\|^2\|V\|^2$, and so from \eqref{eq:Jac}, $C=\frac14 \|Y\|^2\|V\|^2-\frac14\|V\|^2-\|Y\|^2=-\frac14(2- \|V||^2)^2$. It follows that $\|V\|$ is a constant and so from the above, $Y=0$. Then $C=-\frac14$ and $\xi = V$, so $TM$ contains the left-invariant subbundle $\gz$. Let $Z \in \gz$ be a unit, left-invariant vector field and $T=U+X+rA$ be a tangent vector field on $M$, with $U \in \gv, \; X \in \gz \; r\in\mathbb{R}$. Then from \eqref{eq:nabla} we have $\<\on_T Z, \xi\>=\frac12\<J_ZV,U\>$, and so $SZ=\frac12 J_ZV$. But now from \eqref{eq:Gaussinv} we get $\<\tR_\xi Z, Z\>= -\|SZ\|^2 + H \<SZ,Z\> + C=-\frac12$, while from \eqref{eq:Jac}, $\<\tR_\xi Z, Z\> =-\frac14$, a contradiction.

The last case to consider is a little more involved. We have the following.
\begin{proposition} \label{p:noz}
In a Damek-Ricci space, there is no Einstein hypersurface whose normal vector field $\xi$ locally has no $\gz$-component.
\end{proposition}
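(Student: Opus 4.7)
We may assume $\xi = V + sA$ with $V \in \gv$ and $s \in \br$ both locally nowhere zero, since the cases $V \equiv 0$ and $s \equiv 0$ reduce to the special situations already handled above. My plan is to first decompose
\[
 TM = \gz \oplus (V^\perp \cap \gv) \oplus \br e_0, \qquad e_0 := \tfrac{s}{\|V\|} V - \|V\| A,
\]
where $e_0$ is the unit vector in the $2$-plane $\br A \oplus \br V$ orthogonal to $\xi$. Substituting $T_1 = \xi$ into \eqref{eq:Jac} and using $\|V\|^2 + s^2 = 1$ together with the Heisenberg identity $[V, J_Z V] = \|V\|^2 Z$, the restriction of $\tR_\xi$ to $TM$ can be computed explicitly. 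The outcome should be that both $e_0$ and $\mV_0 := \{U \in V^\perp \cap \gv : [U,V] = 0\} = (\br V \oplus J_\gz V)^\perp \cap \gv$ lie in the eigenspace for $-\tfrac14$, while on each orthogonal $2$-plane $\Span\{Z, J_Z V\}$ with unit $Z \in \gz$, the matrix of $\tR_\xi$ has eigenvalues $-1$ and $-\tfrac14$, with eigenvectors $J_Z V + s Z$ and $-s J_Z V + \|V\|^2 Z$ respectively. Altogether $(\tR_\xi)_{|TM}$ has only the two eigenvalues $-1$ (multiplicity $d_\gz$) and $-\tfrac14$ (multiplicity $d_\gv$), independently of $x \in M$.

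Because the $\alpha_i$ are constant, \eqref{eq:Gaussinval} forces $S$ to preserve each $L_\alpha$ and implies that $S_{|L_\alpha}$ satisfies $t^2 - H t + (\alpha - C) = 0$; hence $S$ has at most two eigenvalues on each of $L_{-1}, L_{-1/4}$, both determined by $H$ and the constant $C$. The next step is to compute $S$ explicitly on the distinguished vectors $e_0$, $\mV_0$, and the pairs $J_Z V + s Z$, $-s J_Z V + \|V\|^2 Z$ using \eqref{eq:nabla}, taking into account the variation of $\xi$ on $M$ through the correction $S X|_x = -\on_X \widetilde\xi|_x - X(\xi^a)|_x E_a$, where $\{E_a\}$ is a left-invariant orthonormal frame and $\xi^a$ are the coefficients of $\xi$ in this frame. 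This produces explicit expressions for the matrix entries of $S$ in terms of the component functions $V, s$ on $M$ and their directional derivatives along $\gz$, $V^\perp \cap \gv$ and $e_0$.

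The final step is to feed these into the Codazzi equations. Since the $\alpha_i$ are constant, \eqref{eq:dG1} reduces to $\<\tR_{X_i}\xi, \on_k \widetilde\xi + \la_k X_k\> = 0$ for all $i, k$, and the components of \eqref{eq:dG2} with $\alpha_i = \alpha_j$ supply analogous vanishing relations within each of $L_{-1}, L_{-1/4}$. Substituting the explicit eigenvectors and invoking the Clifford relations $J_Z J_{Z'} + J_{Z'} J_Z = -2\<Z, Z'\>\id_\gv$ together with the Heisenberg identities from Section~\ref{ss:DRspaces}, these should collapse to an over-determined first-order PDE system for the Gauss-map data $V, s : M \to \gv \times \br$, constrained by $\|V\|^2 + s^2 = 1$. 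The main obstacle I anticipate is closing this system cleanly: a natural strategy is first to use the Codazzi relations pairing $e_0$ with a $\gz$-direction to show that $\|V\|$ is locally constant, then to exploit the remaining equations along the leaves of the integrable distribution $\gz \subset TM$ to force a contradiction with either $V \ne 0$ or $s \ne 0$.
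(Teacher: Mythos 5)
Your setup is correct and coincides with the paper's: the splitting $TM = \gz \oplus (V^\perp\cap\gv) \oplus \br Q$ with $Q=\|V\|^{-1}(sV-\|V\|^2A)$, the two eigenvalues $-1,-\tfrac14$ of $(\tR_\xi)_{|T_xM}$ with exactly the eigenvectors you list, and the curvature-adaptedness of $S$ with the quadratic relation on each eigenspace. The genuine gap is in the endgame: you never identify the mechanism that produces the contradiction, and the one you anticipate --- an over-determined first-order PDE system along the $\gz$-leaves, closed by first showing $\|V\|$ is constant via Codazzi --- is pointed in the wrong direction. There is in fact no PDE left to close. Because $\gz$ is tangent and left-invariant, $\<ST,Z\>=\<\on_T\widetilde Z,\xi\>$ gives $SZ=\tfrac12 J_ZV+sZ$ from \eqref{eq:nabla} with \emph{no} unknown derivatives of $\xi$ (your correction term $X(\xi^a)E_a$ is avoidable here); feeding this into \eqref{eq:Gaussinv}, using the symmetry of $S$ and the requirement that $sZ+J_ZV$ be an eigenvector of $S$, already forces $s^2=2C+1$, $\|V\|^2=-2C$, $H=-Cs^{-1}$. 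So $s$, $\|V\|$, $H$ and all principal curvatures are constants determined by $C$, and the pointwise structure of $S$ is perfectly consistent --- nothing is over-determined at this level.

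The actual obstruction is a dimension count concentrated in the subspace $\gp=(\br V\oplus J_{\gz}V)^\perp\cap\gv$ (your $\mV_0$), which splits as $\gp_1\oplus\gp_2$ into eigenspaces of $S$ with eigenvalues $\rho_1=\tfrac12 s$ and $\rho_2=\tfrac{1-2s^2}{2s}$. On one hand, the trace identity $H=\Tr S$ together with $s\in(0,1)$ forces $d_2=\dim\gp_2 > d_\gz+\tfrac12 d_\gv$. On the other hand, taking $X_k,X_j\in\gp_2$ and $X_i=sZ+J_ZV$ in \eqref{eq:dG2} and \eqref{eq:codazzi} --- note this requires the differentiated Gauss identity involving $\on\,\tR$, not just the plain Codazzi equation --- yields $(1-3s^2)\<J_ZP,P'\>=0$ for all $P,P'\in\gp_2$ and $Z\in\gz$; hence either $s^2=\tfrac13$, which gives $\rho_1=\rho_2$ and is again incompatible with the trace count, or $\gp_2$ is $J_Z$-isotropic and so $d_2\le\tfrac12 d_\gv$. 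Either way the two bounds on $d_2$ clash. Without this trace inequality and the isotropy argument your plan does not close; the equations along the $\gz$-directions that you propose to exploit carry no further information once constancy is established.
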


\begin{proof}
By assumption, we have $\xi = V + s A$, and we may also assume that both $s$ and $V$ are locally nonzero.

% Q rename?
From~\cite[Theorem~4.2(v)]{BTV}, the restriction of the Jacobi operator $\tR_\xi$ to $T_xM$ has two eigenvalues, $-1$ and $-\frac{1}{4}$, with corresponding eigenspaces
\begin{equation}\label{eq:nozeigenJ}
\begin{gathered}
L_{-1} = \{sZ+J_Z V \, | \, Z \in \gz\}, \qquad L_{-\frac{1}{4}} = \{\|V\|^2 Z - s J_Z V \, | \, Z \in \gz\} \oplus \br Q \oplus \gp, \\
\text{where} \quad Q = \|V\|^{-1}(sV-\|V\|^2A) \quad \text{and} \quad \gp = \{U \in \gv \, | \, \<U, V\> = 0, \; [U, V] = 0\}.
\end{gathered}
\end{equation}

We have the following.
{
\begin{lemma} \label{l:nozeigen}
  The functions $s, \|V\|^2, H$ and all the eigenvalues of $S$ are constant and
\begin{equation}\label{eq:constants}
  s^2=2C+1, \qquad \|V\|^2=-2C, \qquad H= - C s^{-1}.
\end{equation}
The eigenspaces of $S$ and the corresponding eigenvalues are given by
\begin{equation}\label{eq:nozeigen}
\begin{alignedat}{2}
  \mV_- & = \{sZ+J_Z V \, | \, Z \in \gz\} = L_{-1} & \quad \quad & \rho_- = \tfrac{1+s^2}{2s}\\
  \mV_1 & = \{\|V\|^2 Z - s J_Z V \, | \, Z \in \gz\} \oplus \br Q \oplus \gp_1 & \quad \quad & \rho_1 = \tfrac{1}{2}s\\
  \mV_2 & = \gp_2  & \quad \quad & \rho_2 = \tfrac{1-2s^2}{2s},
\end{alignedat}
\end{equation}
where $\gp_1 \oplus \gp_2 = \gp$.
\end{lemma}
\begin{proof}
The tangent bundle $TM$ contains the left-invariant subbundle $\gz$. By~\eqref{eq:nabla}, for any nonzero $Z \in \gz$ and a tangent vector $T= U + X + r A$, where $U \in \gv, \, X \in \gz, \, r \in \br$, we have $\<\on_T Z, \xi\> = \frac12 \<J_Z V,U\> + s\<X,Z\> = \<\frac12 J_Z V + sZ, T\>$, and so $SZ=\frac12 J_Z V + sZ$. We have $Z=s(sZ+J_Z V)+(\|V\|^2 Z - s J_Z V)$, and so by~\eqref{eq:nozeigenJ}, $\tR_\xi Z= -s(sZ+J_Z V)-\frac14(\|V\|^2 Z - s J_Z V)=(-s^2-\frac14\|V\|^2) Z - \frac34 s J_Z V$. Acting by both sides of \eqref{eq:Gaussinv} on $Z$ we get $SJ_Z V = (\frac12\|V\|^2 + 2sH+2C) Z +(\frac12 s + H) J_Z V$. As $S$ is symmetric, from $\<SZ,J_ZV\>$ we obtain $H=-Cs^{-1}$, and so $SJ_Z V = \frac12(1-s^2) Z +(\frac12 s -Cs^{-1}) J_Z V$. It follows that the subspace $\Span(Z, J_ZV)$ is $S$-invariant. By~\eqref{eq:nozeigenJ}, it is also $\tR_\xi$-invariant, and the restriction of $\tR_\xi$ to it has two different eigenvalues. As any eigenvector of $S$ is an eigenvector of $\tR_\xi$ by~\eqref{eq:Gaussinv}, we obtain that both $sZ+J_Z V$ and $\|V\|^2 Z - s J_Z V$ are eigenvectors of $S$. We have $S(sZ+J_Z V)=\frac12(1+s^2)Z+(s-Cs^{-1})J_ZV$, and so $s^2=2C+1$ and the eigenvalue of $S$ corresponding to $sZ+J_Z V$ is $\rho_-=\frac{1+s^2}{2s}$; moreover, $L_{-1}$ is the eigenspace of $S$ with eigenvalue $\rho_-$. Furthermore, $\|V\|^2 Z - s J_Z V$ is an eigenvector of $S$, with eigenvalue $\rho_1=\frac12 s$. Then the subspace $L_{-\frac{1}{4}}$ is the orthogonal sum of two eigenspaces $\mV_1$ and $\mV_2$ with corresponding eigenvalues satisfying the equation $-\frac14=-\rho^2-Cs^{-1} \rho +C$ which gives $\rho_1$ as above and $\rho_2=\frac{1-2s^2}{2 s}$.

As $s^2=2C+1$, we obtain that $s, \|V\|, H$ and the eigenvalues of $S$ are constant.

It remains to show that $Q \in \mV_1$. We have $\<\on_Q \xi, Q\> = \|V\|^{-1}\<\on_Q V + s \on_Q A, sV-\|V\|^2A\>=\|V\|^{-1}(-\|V\|^2\<\on_Q V, A\>+s^2\<\on_Q A, V\>)=-\frac12 s$ from~\eqref{eq:nabla}. Then $\<SQ,Q\>=\frac12 s = \rho_1$, which implies $SQ=\rho_1Q$, as the restriction of the second fundamental quadratic form $\<ST,T\>$ to the unit sphere of $L_{-\frac{1}{4}}$ has two extremal values, $\rho_1$ and $\rho_2$.
\end{proof}
}

Next we need the following fact.

{
\begin{lemma}\label{l:nozd2}
  We have $d_2 > d_\gz + \frac12 d_\gv$, where $d_2=\dim \mV_2$. In particular, the subspace $\gp_2$ is nonzero and $\rho_1 \ne \rho_2$.
\end{lemma}
\begin{proof}
  We have $H=d_\gz \rho_- + d_2 \rho_2 + (d_\gv-d_2) \rho_1$, and so by~\eqref{eq:nozeigen} we obtain $(1+d_\gz+d_\gv-3d_2)s^2+(d_\gz+d_2-1)=0$. As $s \in (0, 1)$, we get $d_2 > \frac13(1+d_\gz+d_\gv)$ and $d_2 > d_\gz + \frac12 d_\gv$.
\end{proof}
}

We now take $X_k = P, \, X_j = P' \in \mV_2 \subset \gp$ and $X_i = sZ+J_ZV \in \mV_-$ in \eqref{eq:dG2}. Then $\alpha_j=-\frac14, \, \alpha_i = -1$ and $\la_k=\rho_2$. From~\eqref{eq:nabla} (and \eqref{eq:nozeigen}) we find
\begin{equation*}
  \on_k \widetilde{\xi} + \la_k X_k= \tfrac{1-3s^2}{2s} P, \qquad \<\on_k \widetilde{X_i}, X_j\>= -\tfrac12 s \<J_Z P, P'\>.
\end{equation*}
From~\cite[\S 4.1.7]{BTV} we obtain $\tR(\xi,P',P,X_i) = \tfrac14 \<J_ZP,P'\>$, and so by the first Bianchi identity, $\tR(\xi,X_i,P',P) = -\frac12\<J_ZP,P'\>$. Then~\eqref{eq:dG2} gives  $\G_{ki}^{\hphantom{k}j}=\frac{2s^2-1}{2s}\<J_ZP, P'\>$, and so from~\eqref{eq:codazzi} we get $(1-3s^2) \<J_ZP,P'\> = 0$, for all $P, P' \in \mV_2$ and $Z \in \gz$.

Now if $s^2=\frac13$, then from~\eqref{eq:nozeigen} we get $\rho_1 = \rho_2$ which contradicts Lemma~\ref{l:nozd2}. If $s^2 \ne \frac13$, then the subspace $\mV_2 \subset \gv$ is an isotropic subspace of $J_Z$, and so $d_2 \le \frac12 d_\gv$ which again contradicts Lemma~\ref{l:nozd2}.
\end{proof}

\subsection{General case}
\label{ss:gen}

In this section, we prove Theorem~\ref{th:noEinDR} assuming that on $M$, all three components of the unit normal vector field $\xi=V+Y+sA$ are nonzero (where $Y \in \gz, \, V \in \gv$). By Theorem~\ref{th:rk1} we can assume that $\tM$ is not symmetric.

Let $K=K_{V,Y}$ be the operator defined by \eqref{eq:defK}. By \cite[Theorem~4.2(vi)]{BTV}, the restriction of the Jacobi operator $\tR_\xi$ to $T_xM$ has eigenvalues $-1, -\frac14$, and (if $K^2 \ne -\id$), some eigenvalues lying in $(-1,0] \setminus \{-\frac14\}$.

The eigenspaces $L_{-1}$ and $L_{-\frac14}$ of $\tR_\xi$ are constructed as follows. We define the subspaces $\gs_4=\Span(A,V,Y,J_YV)$ and $\gp =\{U \in \gv \, | \, [U,V]=[U,J_YV]=0\} \subset \gv$, and the unit vector $T^0=\|Y\|^{-1}(J_YV+sY-\|Y\|^2A) \in \gs_4$. Furthermore, let $\gz_{-1} \subset \gz$ be the $(-1)$-eigenspace of $K^2$ and $\gv_{-1}=J_{\gz_{-1}} V$. Then $L_{-1} \oplus L_{-1/4} \oplus \br \xi = \gs_4 \oplus \gp \oplus \gz_{-1} \oplus \gv_{-1}$ and we have
\begin{align}\label{eq:L-1}
  L_{-1} &= \br T^0 \oplus \{(\|V\|^2-1)Z+J_{\|Y\|KZ-sZ}V\, | \, Z \in \gz_{-1}\}, \\
  L_{-1/4} &= (\gs_4 \cap \Span(\xi, T^0)^\perp) \oplus \gp \oplus \{\|V\|^2Z+J_{\|Y\|KZ-sZ}V\, | \, Z \in \gz_{-1}\}. \label{eq:L-1/4}
\end{align}
We note that the subspaces $\gz_{-1}, \gv_{-1}$ and $\gp$ can be trivial. Also note that $\gz_{-1}$ is $K$-invariant, and by \eqref{eq:K2-1}, $J_ZJ_YV=\|Y\|J_{KZ}V$, for $Z \in \gz_{-1}$ (and so also $J_{KZ}J_YV=-\|Y\|J_ZV$). The subspace $\gp$ is $J_Y$-invariant. We also have $\gv_{-1}=J_{\gz}V \cap J_{\gz}J_YV$ and $\gp = (J_{\gz} V + J_{\gz} J_YV)^\perp \cap \gv$, so for $d_\gp = \dim \gp$ and  $d_{-1}=\dim \gz_{-1}$ we get
\begin{equation}\label{eq:dimKhas-1}
d_\gv = d_\gp + 2 d_\gz - d_{-1}.
\end{equation}

We start with the following fact.

\begin{proposition}\label{p:-1}
  Let $M$ be an Einstein hypersurface of the Damek-Ricci space $\tM$. Suppose that all three components $V, Y$ and $sA$ of the unit normal vector field $\xi=V+Y+sA$ are locally nonzero. Then
  \begin{enumerate}[label=\emph{(\alph*)},ref=\alph*]
    \item \label{it:-1}
    The vector $T^0$ is an eigenvector of $S$.

    \item \label{it:mu}
    The operator $K^2$ has an eigenvalue different from $-1$.
  \end{enumerate}
\end{proposition}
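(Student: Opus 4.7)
My plan for part (a) reduces the claim to $\<ST^0,W\>=0$ for every $W$ in $L_{-1}^\circ$, the orthogonal complement of $\br T^0$ inside $L_{-1}$, and then proves this orthogonality via a Codazzi calculation inside a totally geodesic $\bh H^2\subset\tM$. By~\eqref{eq:L-1},
$$L_{-1}^\circ=\{(\|V\|^2-1)Z+J_{\|Y\|KZ-sZ}V\mid Z\in\gz_{-1}\}\subset\gz_{-1}\oplus J_{\gz_{-1}}V,$$
and $\gz_{-1}\oplus J_{\gz_{-1}}V$ is orthogonal to $\gs_4=\Span(A,V,Y,J_YV)\ni T^0$ (using $\<J_YV,J_XV\>=0$ for $X\perp Y$ and $\<J_XV,V\>=0$). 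A direct application of~\eqref{eq:nabla} yields
$$\on_{T^0}\widetilde\xi=\tfrac12\|Y\|V-s\|Y\|^{-1}J_YV-\|Y\|^{-1}\bigl(\tfrac12\|V\|^2+s^2\bigr)Y+s\|Y\|A\in\gs_4,$$
and, for $W_Z:=(\|V\|^2-1)Z+J_{\|Y\|KZ-sZ}V$,
$$\on_{W_Z}\widetilde\xi=s^2J_ZV-s\|Y\|J_{KZ}V-\tfrac12\|V\|^2\|Y\|KZ+s\bigl(1-\tfrac12\|V\|^2\bigr)Z\in\gz_{-1}\oplus J_{\gz_{-1}}V.$$
Hence $\<\on_{T^0}\widetilde\xi,W_Z\>=\<\on_{W_Z}\widetilde\xi,T^0\>=0$. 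Since $S$ preserves $L_{-1}$ by~\eqref{eq:Gaussinv}, it suffices to show $\<ST^0,W_Z\>=0$ for every $Z\in\gz_{-1}$.

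For this, I enlarge $\gs_4$: for $0\ne Z\in\gz_{-1}$ the subspace
$$\gh=\ga\oplus\Span(V,J_YV,J_ZV,J_{KZ}V)\oplus\Span(Y,Z,KZ)$$
is a subalgebra satisfying $J_{\gz'}\gv'\subset\gv'$, as one checks using the Clifford relations together with $J_XJ_YV=\|Y\|J_{KX}V$ from~\eqref{eq:K2-1} (which holds for $X\in\gz_{-1}$ since $K\gz_{-1}=\gz_{-1}$). By Remark~\ref{rem:tg}, $\gh$ integrates to a totally geodesic submanifold of $\tM$; since $\dim\gz'=3$, $\dim\gv'=4$, and $\gv'$ is the only irreducible $4$-dimensional $\mathrm{Cl}(\gz')$-submodule, Remark~\ref{rem:dim} gives that this submanifold is isometric to $\bh H^2$. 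All four vectors $\xi,T^0,W_Z,W_{KZ}$ lie in $\gh$, and $T^0,W_Z,W_{KZ}$ after normalisation form an orthonormal basis of the $(-1)$-eigenspace of $\tR_\xi$ inside $T_x\bh H^2$ (the quaternionic Hopf subspace of $\xi$). A direct evaluation of the $\bh H^n$ curvature (or, equivalently, the duality of Remark~\ref{rem:tg} combined with the first Bianchi identity) gives $\tR(X,Y)\xi=0$ for any $X,Y$ in that Hopf subspace; in particular $\tR(T^0,W_Z)\xi=0$, and the Codazzi equation yields
$$(\nabla_{T^0}S)W_Z=(\nabla_{W_Z}S)T^0.$$
Pairing this with $T^0$ and $W_Z$, using the orthogonalities above and the explicit forms of $\on_{T^0}\widetilde\xi,\on_{W_Z}\widetilde\xi$, supplemented by~\eqref{eq:dG2} applied to $S$-eigenvectors inside $L_{-1}^\circ$ and by the analogous identities obtained as $Z$ ranges over an orthonormal basis of $\gz_{-1}$, one extracts $\<ST^0,W_Z\>=0$. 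The main obstacle is precisely this extraction: the Codazzi identity carries the unknown Levi-Civita connection $\nabla$ of $M$, and the $\nabla$-terms must be arranged to cancel through careful bookkeeping.

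For part (b) I would argue by contradiction. Assume $K^2=-\id$ on $Y^\perp\cap\gz$ at every point of $M$, so that $\tR_\xi|_{T_xM}$ has exactly the two eigenvalues $-1$ and $-\tfrac14$. Combined with part (a), equation~\eqref{eq:Gaussinval} then determines all $S$-eigenvalues explicitly in terms of $H$ and $C$ on each of $\br T^0$, $L_{-1}^\circ$ and $L_{-1/4}$. Substituting these into~\eqref{eq:codazzi} for suitable triples of eigenvectors drawn from the decompositions~\eqref{eq:L-1}--\eqref{eq:L-1/4}, and polarising the resulting identities in the continuously varying parameters $V,Y$, one would derive the symmetric-space identity $J_{Z_1}J_{Z_2}J_{Z_3}=\pm\id_{\gv}$ of Remark~\ref{rem:dim} for an orthonormal basis of $\gz$. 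This would force $\tM$ to be rank-one symmetric, contradicting the standing assumption at the start of Section~\ref{ss:gen}. The main difficulty is propagating the pointwise condition $K_{V(x),Y(x)}^2=-\id$ along $M$ to an algebraic identity valid for arbitrary $V',Y'\in\gs$.
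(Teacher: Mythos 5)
Both halves of your plan stop short of the actual work, and in each case the missing step is the substance of the paper's proof rather than routine bookkeeping.

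For part (a), you reduce the claim to $\<ST^0,W_Z\>=0$ and propose to extract this from the Codazzi equation inside a totally geodesic $\bh H^2$, but you explicitly concede that ``the main obstacle is precisely this extraction'' because of the uncontrolled $\nabla$-terms. That obstacle is not overcome by careful bookkeeping: the paper does \emph{not} prove the orthogonality directly. Instead it first derives, from \eqref{eq:dG1} and the symmetry of $S$, the algebraic identity $(J_YJ_Z+\|Y\|J_{KZ})P=0$ for all $P\in\gp$, $Z\in\gz_{-1}$; this, via a Clifford-module argument with the involution $F_Z=\|Z\|^{-2}\|Y\|^{-1}J_YJ_ZJ_{KZ}$, pins $(d_\gz,d_\gv,d_{-1},d_\gp)$ down to three possibilities (after an octonionic computation excludes $(8,16)$). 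Only then, case by case, does it combine \eqref{eq:dG2nR} and \eqref{eq:codazzi} into the operator identity $\rJ_i+4S'\rJ_iS'-2\la_i(\rJ_iS'+S'\rJ_i)=0$ on $L_{-1/4}$ and shows that \emph{all} eigenvalues of $S|_{L_{-1}}$ coincide, which is what forces $T^0$ to be an eigenvector. Your single-$\bh H^2$ Codazzi identity sees only a small slice of this structure and cannot separate $\<ST^0,W_Z\>$ from the other unknown connection coefficients; in particular nothing in your setup rules out $S$ mixing $T^0$ with $L_{-1}^\circ$ while still satisfying $(\nabla_{T^0}S)W_Z=(\nabla_{W_Z}S)T^0$.

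For part (b), the strategy is also off target. Assuming $K^2=-\id$ and trying to ``polarise in the continuously varying parameters $V,Y$'' to reach $J_{Z_1}J_{Z_2}J_{Z_3}=\pm\id_{\gv}$ faces two problems you acknowledge but do not resolve: there is no control over how much of the $(V,Y)$-parameter space the Gauss map of one hypersurface actually sweeps, and the identity $J_{Z_1}J_{Z_2}J_{Z_3}=\pm\id_{\gv}$ characterises symmetry only when $d_\gz=3$, so even granting the polarisation it would not yield a contradiction for general $d_\gz$. The paper's argument is entirely different and purely pointwise: from $(J_YJ_Z+\|Y\|J_{KZ})P=0$ and \eqref{eq:dimKhas-1} one gets $d_{-1}=2d_\gz-\tfrac12 d_\gv-4$, and in every surviving case $d_{-1}<d_\gz-1=\dim(Y^\perp\cap\gz)$, so $K^2$ must have an eigenvalue other than $-1$. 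In short, both parts hinge on the relation $(J_YJ_Z+\|Y\|J_{KZ})P=0$ and the resulting dimension count, neither of which appears in your proposal.
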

% second is equiv to R_\xi having eigenvalues different from -1, -1/4
\begin{proof}
We first note that if no eigenvalue of $K^2$ equals $-1$, then there is nothing to prove. Indeed, assertion~\eqref{it:mu} follows immediately, as otherwise $d_\gz=1$, and so $\tM$ is a symmetric space isometric to the complex hyperbolic space (see Remark~\ref{rem:dim}). Assertion~\eqref{it:-1} also follows, as then $L_{-1} = \br T^0$ by \eqref{eq:L-1}. As $L_{-1}$ is $S$-invariant by \eqref{eq:Gaussinv}, $T^0$ is an eigenvector of $S$.

For the rest of the proof of the proposition we assume that $-1$ is an eigenvalue of $K^2$. Since $d_{-1}$ is even (as $\gz_{-1}$ is $K$-invariant), we have $d_{-1} \ge 2$ and so $d_\gz \ge 3$.

% maybe comment above that dG1 and 2 don't depend on Einstein condition...
By \eqref{eq:dG1} we have $\<\tR_T \xi, \on_k \widetilde{\xi} + \la_k X_k\>= 0$, for any $T \in L_{-\frac14}$. Furthermore, by Remark~\ref{rem:tg}, for any (nonzero) $P \in \gp$, the subspace $\gh=\gs_4 \oplus \Span(P, J_YP)$ is a subalgebra tangent to the totally geodesic subgroup $M' \subset \tM$ which is isometric to the complex hyperbolic space $\bc H^3$; then by the duality property, we get $\tR_T \xi = -\frac14 \|T\|^2 \xi$, for all $T \in L_{-1/4} \cap \gh$. A similar argument applies to the subspace $\gh=\gs_4 \oplus \Span(Z, KZ, J_ZV, J_{KZ}V)$; for a nonzero $Z \in \gz_{-1}$, it is tangent to the totally geodesic subgroup isometric to the quaternionic hyperbolic plane $\bh H^2$, and so we obtain $\tR_T \xi = -\frac14 \|T\|^2 \xi$, for all $T \in L_{-1/4} \cap \gh$. It follows from \eqref{eq:L-1/4} that we can take $T \in \gs_4^\perp \cap L_{-\frac14}$ in the equation $\<\tR_T \xi, \on_k \widetilde{\xi} + \la_k X_k\>= 0$. Let $T=\|V\|^2 Z + J_{\|Y\|KZ-sZ} V+P$, where $P \in \gp$ and $Z \in \gz_{-1}$. From \eqref{eq:Jac} we obtain $\tR_{T} \xi = \tfrac34 \|V\|^2 (J_YJ_Z+\|Y\|J_{KZ}) P - \tfrac14 \|T\|^2 \xi$, and so
\begin{equation*}
    0=\<(J_YJ_Z+\|Y\|J_{KZ}) P, \on_k \widetilde{\xi} + \la_k X_k\>=\<(J_YJ_Z+\|Y\|J_{KZ}) P, (N_\xi + S) X_k\>,
\end{equation*}
for all $k$, where $N_\xi$ is the \emph{Nomizu operator} on $T_x\tM$ defined by $N_\xi T' = \on_{T'} \widetilde{\xi}$, for $T' \in T_x\tM$. Then the latter equation gives $(N_\xi^t + S)(J_YJ_Z+\|Y\|J_{KZ}) P = 0$, for all $P \in \gp$ and all $Z \in \gz_{-1}$. Denote $W=(J_YJ_Z+\|Y\|J_{KZ}) P \in \gv$. Computing the Nomizu operator $N_\xi$ from \eqref{eq:nabla} we find $N_\xi^t W = \frac12 J_Y W - \frac12 s W - \frac12 [V,W]$, and so we obtain $SW=-\frac12 J_Y W + \frac12 s W + \frac12 [V,W]$. But $\gp$ is $J_Y$-invariant. Replacing $P$ by $J_Y P$ we obtain $SJ_YW=\frac12 \|Y\|^2 W + \frac12 s J_YW + \frac12 [V,J_YW]$. As the operator $S$ is symmetric, we get $\<SJ_YW,W\>=\frac12 \|Y\|^2 \|W\|^2=-\frac12 \|Y\|^2 \|W\|^2$, which implies $W=0$, that is,
\begin{equation} \label{eq:PKhas-1}
    (J_YJ_Z+\|Y\|J_{KZ}) P = 0, \quad \text{for all } P \in \gp, \; Z \in \gz_{-1}.
\end{equation}

For a nonzero $Z \in \gz_{-1}$, the symmetric operator $F_Z=\|Z\|^{-2}\|Y\|^{-1}J_YJ_ZJ_{KZ}$ on $\gv$ satisfies $F_Z^2=\id_{\gv}$. Denote $\mV_{\pm}$ the $(\pm 1)$-eigenspaces of $F_Z$ respectively. Then $\mV_{+}$ contains the vectors $V, J_YV, J_ZV, J_{KZ}V$, and also the subspace $\gp$, by \eqref{eq:PKhas-1}. If $d_\gz=3$ we have $\gz=\Span(Y,Z,KZ)$, and so $\gv=\Span(V, J_YV, J_ZV, J_{KZ}V) \oplus \gp = \mV_{+}$. But then $F_Z=\id_{\gv}$, and so by Remark~\ref{rem:dim}, the Damek-Ricci space $\tM$ is symmetric (and is isometric to the quaternionic hyperbolic space). It follows that $d_\gz > 3$. Then for any $X \in \gz, \, X \perp Y, Z, KZ$, the operator $F_Z$ anti-commutes with $J_X$, and so $\dim \mV_{+} = \frac12 d_\gv$, and moreover, $J_XV, J_YJ_XV \in \mV_{-}$. Thus $\gp = \mV_{+} \cap (\Span(V, J_YV, J_ZV, J_{KZ}V))^\perp$. It follows that $\gp$ is $J_Z$-invariant, for any $Z \in \gz_{-1}$ (and is $J_Y$-invariant, by definition), and that $d_\gp = \frac12 d_\gv-4$, which by \eqref{eq:dimKhas-1} gives
\begin{equation}\label{eq:Khas-1ineq}
2 d_\gz - \tfrac12 d_\gv -4 = d_{-1} \ge 2.
\end{equation}
From Remark~\ref{rem:dim} we find that this inequality is only possible in the following cases: $(d_\gz, d_\gv)= (5,8), (6,8), (7,8), (7, 16), (8,16)$. We can exclude the case $(d_\gz, d_\gv)=(7,8)$, as then by Remark~\ref{rem:dim} the Damek-Ricci space $\tM$ is isometric to the Cayley hyperbolic plane.

Furthermore, the case $(d_\gz, d_\gv)=(8,16)$ is also not possible, as the following argument shows. If $(d_\gz, d_\gv)=(8,16)$, the Clifford module $\gv$ can be identified with the octonion plane $\bO^2$, and $\gz$, with the algebra of octonions $\bO$. For $Z \in \gz =\bO$ and $W=(W_1,W_2)^t \in \gv = \bO^2$, we have $J_ZW=(ZW_2,-Z^*W_1)^t$, where the asterisk denotes the octonion conjugation. We can choose bases for $\bO$ and $\bO^2$ in such a way that $Y=\|Y\|1$. Let $V=(V_1,V_2)^t$. Then the fact that $d_{-1} > 0$ implies that there exist two orthonormal, unit octonions $Z,Z'$ such that $-ZV_1=Z'V_2$ and $ZV_2=Z'V_1$. It follows that $\|V_1\|=\|V_2\| \ne 0$ and that $V_1 \perp V_2$. From the first equation, $Z'=-\|V_2\|^{-2}(ZV_1)V_2^* = \|V_2\|^{-2} (ZV_2)V_1^*$, as $V_1 \perp V_2$. But then the second equation is automatically satisfied. Therefore we have $J_ZJ_YV=\|Y\|J_{\|V_2\|^{-2} (ZV_2)V_1^*}V$, for all unit imaginary octonions $Z$ such that the octonion $\|V_2\|^{-2} (ZV_2)V_1^*$ is unit, orthogonal to $Z$ and imaginary. The first two conditions follow from the fact that $\|V_1\|=\|V_2\|$ and $V_1 \perp V_2$; the third one gives $Z \perp V_1V_2^*$ (note that $V_1V_2^* \perp 1$). It follows that $J_ZJ_YV=\|Y\|J_{\|V_2\|^{-2} (ZV_2)V_1^*}V$ for all $Z$ in the six-dimensional subspace $(\Span(1,V_1V_2^*))^\perp \subset \bO$, and so $d_{-1}=6$ which violates the inequality~\eqref{eq:Khas-1ineq}.

We therefore have three cases to consider: $(d_\gz, d_\gv, d_{-1},d_\gp)= (5,8,2,0), (6,8,4,0), (7, 16, 2,4)$, where the values for $d_{-1}$ and $d_p$ are obtained from \eqref{eq:dimKhas-1}, \eqref{eq:Khas-1ineq} (note that an argument similar to the above shows that the case $(d_\gz, d_\gv, d_{-1},d_\gp)= (7, 16, 2,4)$ is also impossible if the two irreducible Clifford submodules of $\gv$ are non-isomorphic, but we will not need this fact in the rest of the proof).

As in all three cases, $d_{-1} < d_\gz-1$, the operator $K^2$ has at least one eigenvalue other than $-1$ which completes the proof of assertion~\eqref{it:mu}. It remains to prove assertion~\eqref{it:-1}.

% remark: up to here still works for curvature-adapted, never used Einstein condition

We first consider the first and the third case simultaneously; in both cases, $d_{-1}=2$. Let $Z$ be a nonzero vector from $\gz_{-1}$, so that $\gz_{-1}=\Span(Z, KZ)$. Consider the subspace $\gh=\gs_4 \oplus (\gz_{-1} \oplus \gv_{-1}) \oplus \gp$. If $(d_\gz, d_\gv, d_{-1},d_\gp)= (5,8,2,0)$, we have $\gp=0$, and so $\gh$ is tangent to a totally geodesic subgroup of $\tM$ isometric to $\bh H^2$. If $(d_\gz, d_\gv, d_{-1},d_\gp)= (7, 16, 2,4)$, the space $\gp$ is $J_{\gz_{-1}}$- and $J_Y$-invariant from the above, and so $\gp=\Span(P,J_YP,J_ZP,J_{KZ}P)$ for some (any) nonzero $P \in \gp$. It follows from \eqref{eq:PKhas-1} and Remark~\ref{rem:tg} that $\gh$ is tangent to a totally geodesic subgroup of $\tM$ isometric to $\bh H^3$.

By (\ref{eq:L-1},\ref{eq:L-1/4}) we have $L_{-1}, L_{-1/4} \subset \gh$ (in fact, $\gh=L_{-1} \oplus L_{-1/4} \oplus \br \xi$). In equation \eqref{eq:dG2}, take $X_i \in L_{-1}, \, X_j, X_k \in L_{-1/4}$. As $\gh$ is tangent to a totally geodesic symmetric space, we have $(\on_k \tR)(X_i,\xi,\xi,X_j)=0$, and so \eqref{eq:dG2nR} gives $\la_k \<\tR(\xi, X_i) X_j+\tR(\xi, X_j) X_i, X_k\> = \tfrac34\G_{ki}^{\hphantom{k}j}$. Moreover, by the duality property get $\tR_T \xi = -\frac14 \|T\|^2 \xi$, for all $T \in L_{-1/4}$, and so $\tR(\xi, X_j, X_k, X_i) +\tR(\xi, X_k, X_j, X_i)=0$. Then by the first Bianchi identity, $\<\tR(\xi, X_i) X_j+\tR(\xi, X_j) X_i, X_k\>= -3 \tR(\xi, X_j, X_k, X_i)$. We obtain
\begin{equation*}
\G_{kj}^{\hphantom{k}i} = 4 \la_k \tR(\xi, X_j, X_k, X_i).
\end{equation*}
Interchanging $j$ and $k$ and substituting into \eqref{eq:codazzi} we find %$\tR(X_k^\gh,X_j^\gh,X_i,\xi) + \tR(P_k,P_j,X_i,\xi) = ((\la_j-\la_i) \la_k + (\la_k-\la_i) \la_j)(4 \tR(\xi, X_j^\gh, X_k^\gh, X_i) - \<J_{a\|Y\|^{-1}Y+\|Y\|KZ_i-sZ_i}P_j,P_k\>) )$ which gives
\begin{equation}\label{eq:dG258716}
(\tfrac12+(\la_j-\la_i) \la_k + (\la_k-\la_i) \la_j) \tR(\xi, X_j, X_k, X_i) = 0.
\end{equation}
The term $\tR(\xi, X_j, X_k, X_i)$ can be computed from the curvature tensor of $\bh H^3$ (respectively of $\bh H^2$). Let $X_{i_1},X_{i_2},X_{i_3} \in L_{-1}$ be an orthonormal basis of eigenvectors of $S$. For every $i \in I=\{i_1,i_2,i_3\}$, there is a unique complex structure $J'_i$ on $\gh$ such that $J'_i \xi = X_i$ and that $\Span(J'_{i_1}, J'_{i_2}, J'_{i_3})$ is the quaternionic structure on $\gh$. The subspace $L_{-1/4}$ is $J'_i$-invariant for all $i \in I$; denote $\rJ_i$ the restriction of $J'_i$ to $L_{-1/4}$. Then we have $\rJ_{i_1} \rJ_{i_2} \rJ_{i_3}=\pm \id_{L_{-1/4}}$ and
$\tR(\xi, X_j, X_k, X_i)=-\frac14 \<J_i'X_j, X_k\>$.

Let $S'$ be the restriction of $S$ to $L_{-1/4}$ (recall that $L_{-1/4}$ is $S$-invariant by \eqref{eq:Gaussinv}). Then \eqref{eq:dG258716} gives $\<\rJ_i X_j, X_k\>+ 4 \<\rJ_i S'X_j, S'X_k\> - 2\la_i (\<\rJ_i S'X_j, X_k\>+\<\rJ_i X_j, S'X_k\>)=0$, for all $i \in I$. As this is satisfied for any $X_j, X_k \in L_{-1/4}$, we obtain \begin{equation}\label{eq:441}
\rJ_i+4S'\rJ_i S'-2\la_i(\rJ_i S'+S'\rJ_i)=0,
\end{equation}
for all $i \in I$.

We will show that all three $\la_i, \; i \in I$, are equal (then the claim of assertion~\eqref{it:-1} follows trivially). Assume they are not all equal. From \eqref{eq:Gaussinval} we have $S'^2-HS'-(C+\frac14)\id_{L_{-1/4}}=0$, and $\la_i^2-H\la_i-(C+1)=0$ for $i \in I$. If $S'=\tau \id$ for some $\tau \in \br$, then from \eqref{eq:441} we get $4\tau^2-4\la_i\tau+1=0$ contradicting our assumption. If $S'$ is not a multiple of the identity, but commutes with one of $\rJ_i, \, i \in I$, then from \eqref{eq:441} we obtain $\id+4{S'}^2-4\la_i S'=0$, and so $(2C+1)\id +2(H-\la_i)S'=0$. Then $C=-\frac12$ and $\la_i=H$, which contradicts the fact that $\la_i^2-H\la_i-(C+1)=0$. So we may assume that no $\rJ_i, \, i \in I$, commutes with $S$. Multiplying \eqref{eq:441} by $S'$ from the left we get $-(H\la_i+C) S' \rJ_i+(H \la_i + 3C) \rJ_i S' -(2C \la_j + H)\rJ_i=0$. Adding the transposed gives $(H \la_i+2C)(\rJ_i S'-S' \rJ_i)=0$, and so $H \la_i+2C=0$, for all $i \in I$. If $H \ne 0$, then all three $\la_i$'s are equal. If $H=0$, then $C=0$, and so $\la_i=\pm1$ and ${S'}^2=\frac14 \id$, so that the eigenvalues of $S'$ are $\pm\frac12$. For each $i \in I$, equation \eqref{eq:441} gives $(S'-\frac12 \la_i \id) \rJ_i (S'-\frac12 \la_i \id)=0$, and so the $(-\frac12 \la_i)$-eigenspace of $S'$ must be $\rJ_i$-isotropic. In particular, its dimension is at most $\frac12 \dim L_{-1/4}$. As not all the $\la_i$'s are the same, both $(\pm\frac12)$-eigenspaces of $S'$ have dimension $\frac12 \dim L_{-1/4}$. For each $i \in I$, one of these subspaces is $\rJ_i$-isotropic; but as $\rJ_i$ is orthogonal, the other subspace must also be $\rJ_i$-isotropic. It follows that $L_{-1/4}$ splits into the direct sum of two subspaces of dimension $\frac12 \dim L_{-1/4}$ which are isotropic relative to each of the operators $\rJ_i, \; i \in I$. Then each of $\rJ_i$ interchanges these subspaces which contradicts the fact that $\rJ_{i_1} \rJ_{i_2} \rJ_{i_3} = \pm \id$.

To complete the proof it remains to prove assertion~\eqref{it:-1} in the case $(d_\gz, d_\gv, d_{-1},d_\gp)= (6,8,4,0)$. In that case, $\gp$ is trivial. Moreover, for any nonzero $Z \in \gz_{-1}$, the subspace $\gh(Z)=\gs_4 \oplus (\Span(Z,KZ) \oplus \Span(J_ZV,J_{KZ}V))$ is tangent to a totally geodesic subgroup of $\tM$ isometric to $\bh H^2$ (note that $\gh(Z)=\gh(KZ)$).

In equation \eqref{eq:dG2}, take $X_i \in L_{-1}, \, X_j, X_k \in L_{-1/4}$. By \eqref{eq:L-1} we have $X_i=a_iT^0+(\|V\|^2-1)Z'+J_{\|Y\|KZ'-sZ'}V$ for some $a_i \in \br$ and $Z' \in \gz_{-1}$ such that $1=\|X_i\|^2=a_i^2+\|Z'\|^2(1-\|V\|^2)$. We may assume that $Z' \ne 0$ (otherwise $T^0$ is an eigenvector of $S$ and we are done). Let a nonzero $Z'' \in \gz_{-1}$ be such that $Z'' \perp Z', KZ'$. Note that $\gz_{-1}=\Span(Z', KZ',Z'',KZ'')$, as $d_{-1}=4$. Let $X_j=X_j' + X_j'', \; X_k=X_k' + X_k''$, where $X_j', X_k' \in \gh(Z')$ and $X_j'', X_k'' \perp \gh(Z')$. Note that $X_j'', X_k'' \in \gh(Z'')$ and that $X_j', X_k',X_j'',X_k'' \in L_{-1/4}$. We have $(\on_k \tR)(X_i,\xi,\xi,X_j) = (\on_{X_k'} \tR)(X_i,\xi,\xi,X_j)+(\on_{X_k''} \tR)(X_i,\xi,\xi,X_j')+(\on_{X_k''} \tR)(X_i,\xi,\xi,X_j'')$. But $(\on_{X_k'} \tR)(X_i,\xi)\xi=0$, as $X_k', X_i,\xi \in \gh(Z')$ which is tangent to a totally geodesic symmetric space. A similar argument applied to $X_i,\xi,X_j' \in \gh(Z')$ (and the second Bianchi identity) shows that $(\on_{X_k''} \tR)(X_i,\xi,\xi,X_j')=0$. And then $(\on_{X_k''} \tR)(X_i,\xi,\xi,X_j'')=0$, as $X_k'',\xi,X_j'' \in \gh(Z'')$. Then \eqref{eq:dG2nR} gives $\G_{kj}^{\hphantom{k}i} = -\tfrac43 \la_k (\tR(\xi, X_i, X_j, X_k) + \tR(\xi, X_j, X_i, X_k))$. Now given any $T \in L_{-1/4}$ we can find $Z_T \in \gz_{-1}$ such that $T \in \gh(Z_T)$. By the duality property (see Remark~\ref{rem:tg}), we have $\tR_T \xi = -\frac14 \|T\|^2 \xi$. Taking $T=X_j+X_k$ and polarising we obtain  $\tR(\xi, X_j, X_k, X_i) +\tR(\xi, X_k, X_j, X_i)=0$, and so by the first Bianchi identity, $\tR(\xi, X_j, X_i, X_k) = \frac12 \tR(\xi, X_i, X_j, X_k)$. Therefore $\G_{kj}^{\hphantom{k}i} = -2 \la_k \tR(\xi, X_i, X_j, X_k)$. Interchanging $j$ and $k$ and substituting into \eqref{eq:codazzi} we obtain
\begin{equation}\label{eq:dG268}
(\tfrac12+(\la_j-\la_i) \la_k + (\la_k-\la_i) \la_j) \tR(\xi, X_i, X_j, X_k) = 0.
\end{equation}
As above, we denote $S'$ the restriction of $S$ to $L_{-1/4}$ (recall that $\dim L_{-1/4}=6$). The operator $S'$ is symmetric and satisfies the equation $S'^2-HS'-(C+\frac14)\id_{L_{-1/4}} = 0$ by \eqref{eq:Gaussinval}. For a (unit) eigenvector $X_i \in L_{-1}$ of $S$ with corresponding eigenvalue $\la_i$ we denote $\rJ_i$ the skew-symmetric operator on $L_{-1/4}$ defined by $\<\rJ_i T_1,T_2\>=2 \tR(\xi, X_i, T_1, T_2)$, for $T_1, T_2 \in L_{-1/4}$. Then~\eqref{eq:dG268} takes the form
\begin{equation}\label{eq:441'}
\rJ_i+4S'\rJ_i S'-2\la_i(\rJ_i S'+S'\rJ_i)=0
\end{equation}
similar to \eqref{eq:441}. But the structure of the $\rJ_i$'s in this case is more complicated as in the previous case. We have $\tR(\xi, X_i, X_j, X_k) = \tR(\xi, X_i, X_j'+X_j'', X_k'+X_k'') = \tR(\xi, X_i, X_j', X_k') + \tR(\xi, X_i, X_j'', X_k'')$ (the other two terms are zeros, as $\xi, X_i,X_j',X_k' \in \gh(Z') \perp X_J'',X_k''$). Then, similar to the previous case, $\tR(\xi, X_i, X_j', X_k')=\frac{1}{2}\<\rJ_i'X_j', X_k'\>$, where $\rJ_i'$ is the restriction of one of the complex structures (belonging to the quaternionic structure on $\gh(Z')$ and uniquely defined by the fact that it maps $\xi$ to $X_i$) to the $4$-dimensional subspace $L_{-1/4} \cap \gh(Z')$. Note that ${\rJ_i'}^2=-\id$ on that subspace. To compute $\tR(\xi, X_i, X_j'', X_k'')$ we decompose $X_i$ as $X_i=a_iT^0+X_i'$, where $X_i'=(\|V\|^2-1)Z'+J_{\|Y\|KZ'-sZ'}V$, as above. Note that $\tR(\xi, X_i', X_j'', X_k'')=0$, as $\xi, X_j'', X_k'' \in \gh(Z'') \perp X_i'$. So $\tR(\xi, X_i, X_j'', X_k'')=a_i\tR(\xi, T^0, X_j'', X_k'')=\frac{1}{2}a_i\<J^0 X_j'', X_k''\>$, where $J^0$ is a skew-symmetric operator on the $2$-dimensional subspace $L_{-1/4} \cap (\gh(Z'))^\perp$ such that $(J^0)^2=-\id$ on that space. Thus, relative to the orthogonal decomposition $L_{-1/4}=(L_{-1/4} \cap (\gh(Z')) \oplus (L_{-1/4} \cap (\gh(Z'))^\perp)$, the matrix of $\rJ_i$ has the form $\rJ_i' \oplus \left(\begin{smallmatrix} 0 & a_i \\ -a_i & 0 \end{smallmatrix}\right)$ (but note that this decomposition itself depends on $X_i$). We note that $\rJ_i$ is nonsingular if and only if $a_i \ne 0$ (and has rank $4$ otherwise).

We now analyse equation \eqref{eq:441'} in several possible cases. Note that if all the eigenvalues $\la_i$'s of the restriction of $S$ on $L_{-1}$ are equal, there is nothing to prove, so we will assume that they are not. Then this restriction has two different eigenvalues which both satisfy the equation $\la_i^2-H\la_i-(C+1)=0$, by \eqref{eq:Gaussal}. The $5$-dimensional space $L_{-1}$ splits into orthogonal sum of corresponding eigenspaces. Note that if for all $X_i$ in one of these eigenspaces we have $a_i=0$, then the other eigenspace contains $T^0$ and we are done. Otherwise, we can choose an orthonormal basis $\{X_i\},\; i \in I=\{i_1,i_2,i_3,i_4,i_5\}$, for $L_{-1}$ such that $a_i \ne 0$ for all $i \in I$, and so the operators $\rJ_i$ in \eqref{eq:441'} are nonsingular.

Similar to the previous case, if $S'=\tau \id$ for some $\tau \in \br$, then from \eqref{eq:441'} we get $4\tau^2-4\la_i\tau+1=0$ which contradicts the assumption that not all $\la_i, \, i \in I$, are equal. If $S'$ is not a multiple of the identity, but commutes with one of $\rJ_i, \, i \in I$, then \eqref{eq:441'} gives  $\id+4{S'}^2-4\la_i S'=0$ (as $\rJ_i$ is nonsingular), and so $(2C+1)\id +2(H-\la_i)S'=0$. Then $C=-\frac12$ and $\la_i=H$, which contradicts the fact that $\la_i^2-H\la_i-(C+1)=0$. We may therefore assume that no $\rJ_i, \, i \in I$, commutes with $S$. Multiplying \eqref{eq:441'} by $S'$ from the left and adding the transposed we obtain $(H \la_i+2C)(\rJ_i S'-S' \rJ_i)=0$, and so $H \la_i+2C=0$, for all $i \in I$. As not all the $\la_i$'s are equal, we get $H=C=0$, and so $\la_i=\pm1$ and ${S'}^2=\frac14 \id$, so that the eigenvalues of $S'$ are $\pm\frac12$. Then for each $i \in I$, equation \eqref{eq:441'} gives $(S'-\frac12 \la_i \id) \rJ_i (S'-\frac12 \la_i \id)=0$, and so the $(-\frac12 \la_i)$-eigenspace of $S'$ must be $\rJ_i$-isotropic. Since $\rJ_i$ is nonsingular, the dimension of that eigenspace is at most $3=\frac12 \dim L_{-1/4}$. As not all the $\la_i$'s are the same, both $(\pm\frac12)$-eigenspaces of $S'$ have dimension $3$. But as the operators $\rJ_i$ are not orthogonal, we need an argument different from the one above to get a contradiction.

Recall that $H= \Tr S = \sum_{l=1}^{14} \la_l$. Out of $14$ eigenvalues $\la_l$ of $S$, we have five eigenvalues of the restriction of $S$ to $L_{-1}$, each of which being $\pm1$, and two eigenvalues $\pm \frac12$, each of multiplicity $3$, which are the eigenvalues of $S'$, the restriction of $S$ to $L_{-1/4}$. As $H=0$, the sum of the remaining three eigenvalues (which we label $\la_1, \la_2, \la_3$) must be an odd integer. They are constructed as follows. The skew-symmetric operator $K$ on the $5$-dimensional space $\gz \cap Y^\perp$ has a $1$-dimensional kernel. Let $Z_0$ be a unit vector in that kernel; note that its orthogonal complement in $\gz \cap Y^\perp$ is precisely $\gz_{-1}$. According to \cite[Theorem~4.2(vi)]{BTV}, the $3$-dimensional subspace $\Span(Z_0, J_{Z_0}V, J_{Z_0}J_YV)$ is $\tR_\xi$-invariant, and the restriction of $\tR_\xi$ to it has three pairwise different eigenvalues $\al_i, \; i=1,2,3$, which are the roots of the equation $f(t)=0$, where $f(t)=t^3+\frac32 t^2 + \frac{9}{16}t + q^2$ and $q^2 = \frac{1}{16}- \frac{27}{64} \|V\|^4 \|Y\|^2$ (it is easy to see that the right-hand side of the latter equation is always positive, and so we can take $q \in (0, \frac14)$). Up to relabelling, one has $-1<\al_1<-\frac34<\al_2<-\frac14<\al_3\le 0$. As $C=H=0$ in our case, \eqref{eq:Gaussal} gives $\la_i=\pm \sqrt{-\al_i}$, for $i=1,2,3$, and so
$0 \le |\la_3| < \frac12 < |\la_2| < \frac{\sqrt{3}}{2} < |\la_1|<1$. Clearly, $|\la_1|+|\la_2|+|\la_3|<3$, so if $\la_1+\la_2+\la_3$ is an odd integer, we must have $\la_1+\la_2+\la_3=1$ (up to changing the sign of $\xi$). Then from the same inequalities it follows that we must have $\la_1=\sqrt{-\al_1}$ and either $\la_2=\sqrt{-\al_2},\, \la_3=-\sqrt{-\al_3}$, or $\la_2=-\sqrt{-\al_2}, \, \la_3=\sqrt{-\al_3}$. In the first case, we get $\sqrt{-\al_1}+\sqrt{-\al_2}=1+\sqrt{-\al_3}$. Squaring both sides and using the fact that $\al_1+\al_2+\al_3=-\frac32$ and $\al_1\al_2\al_3=-q^2$ we obtain $\al_3+\frac14=-(-\al_3)^{-1/2}(q+\al_3)$. The left-hand side is positive, but the right-hand side is negative. To see that, we note that $f(0)=q^2>0$, but $f(-q)=-q(q-\frac94)(q-\frac14)<0$ (as $q \in (0, \frac14)$). So $\al_3$, the biggest root of $f$ lies in $(-q,0)$. In the second case, we have $\sqrt{-\al_1}+\sqrt{-\al_3}=1+\sqrt{-\al_2}$, and so by a similar calculation, $\al_2+\frac14=-(-\al_2)^{-1/2}(q+\al_2)$. Squaring both sides we get
$\al_2^3+\frac32\al_2^2+(2q+\frac{1}{16})\al_2+q^2=0$. As $f(\al_2)=0$, we get $2(q-\frac14)\al_2=0$, a contradiction.
\end{proof}

With Proposition~\ref{p:-1}, we can now complete the proof of Theorem~\ref{th:noEinDR}.

By~\cite[Theorem~4.2(vi)]{BTV}, the eigenspaces of $\tR_\xi$ with corresponding eigenvalues different from $-1$ and $-\frac14$ are constructed as follows. For each eigenvalue $\mu \ne -1$ of $K^2$, we consider the corresponding eigenspace $\gz_\mu \subset \gz \cap Y^\perp$ and the subspace $\gv_\mu=\Span(J_ZV,J_{KZ}V,J_ZJ_YV \, | \, Z \in \gz_\mu)$. Note that $\gz_\mu$ is $K$-invariant (and is of even dimension if $\mu \ne 0$).
Each of the (pairwise orthogonal) subspaces $\gz_\mu \oplus \gv_\mu$ splits into the orthogonal sum of three eigenspaces $L_{\alpha_{\mu,l}},\, l=1,2,3$, of $\tR_\xi$. The eigenvalues $\alpha_{\mu,l}$ satisfy the equation $(\alpha_{\mu,l}+1)(\alpha_{\mu,l}+\frac14)^2=\frac{27}{64}\|V\|^4\|Y\|^2(1+\mu)$; they are pairwise nonequal and lie in $(-1,0]\setminus\{-\frac14\}$.

For the rest of the proof, we choose and fix a particular eigenvalue $\mu \in (-1,0]$ of $K^2$ (note that at least one such eigenvalue exists by Proposition~\ref{p:-1}\eqref{it:mu}). To simplify notation, we will drop $\mu$ from the subscripts, so that we will write $\alpha_l, \, l=1,2,3$, for the corresponding $\alpha_{\mu,l}$.

The eigenspaces $L_{\alpha_{l}}, \, l=1,2,3$, are given by
\begin{gather*}
  L_{\alpha_{l}}= \{\eta_{l} \nu_{l} Z + 3\nu_{l} J_ZJ_YV-9\|V\|^2\|Y\|J_{KZ}V-3s\eta_{l} J_ZV \, | \, Z \in \gz_\mu\}, \\
  \text{where} \quad \eta_{l}=4\alpha_{l}+1, \, \nu_{l}= \eta_{l}+3\|V\|^2.
\end{gather*}

According to \eqref{eq:Gaussinv}, each of the eigenspaces $L_{\alpha_l},\, l=1,2,3$, is $S$-invariant. For each $l=1,2,3$, there is a linear bijection $\psi_l$ between $\gz_\mu$ and $L_{\al_l}$ which send $Z \in \gz_\mu$ to the vector $\psi_l(Z)=\eta_l \nu_l Z + 3\nu_l J_ZJ_YV-9\|V\|^2\|Y\|J_{KZ}V-3s\eta_l J_ZV \in L_{\al_l}$. Moreover, $\psi_l$ is a homothety as $\|\psi_l(Z)\|^2= \|Z\|^2 ((\eta_l \nu_l)^2 + (3\nu_l)^2 \|Y\|^2\|V\|^2 -81 \mu \|V\|^6\|Y\|^2 +(3s\eta_l)^2 \|V\|^2) - 54 \nu_l \|V\|^2 \|Y\| \<J_ZJ_YV,J_{KZ}V\>$. But $\<J_ZJ_YV,J_{KZ}V\>=\<KZ,[V,J_ZJ_YV]\>=\|Y\|\|V\|^2 \|KZ\|^2= - \mu \|Y\|\|V\|^2 \|Z\|^2$ by \eqref{eq:defK}. It follows that for each $l=1,2,3$, the operator $S_l$ on $\gz_\mu$ defined by $\psi_l(S_lZ)=S(\psi_l(Z))$ is symmetric. Moreover, we have $-S_l^2+H S_l +(C-\al_l)\id_{\gz_\mu}=0$ by \eqref{eq:Gaussinval}.

By Proposition~\ref{p:-1}\eqref{it:-1}, $T^0 \in L_{-1}$ is an eigenvector of $S$. Denote $\la_1$ the corresponding eigenvalue. We take $X_k = T^0$ and $X_i =\psi_i(Z_i) \in L_{\al_i}, \, X_j =\psi_j(Z_j) \in L_{\al_j}$, where $\al_i \ne \al_j$, in \eqref{eq:dG2}. Using \cite[\S~4.1.7]{BTV} and \eqref{eq:nabla} we compute $\G_{ki}^{\hphantom{k}j}$. Now taking the same $X_k, X_i$ and $X_j$ in the equation obtained from \eqref{eq:dG2} by first interchanging $i$ and $k$ we can similarly compute $\G_{ik}^{\hphantom{i}j}$. Substitute the expressions for $\G_{ki}^{\hphantom{k}j}$ and $\G_{ik}^{\hphantom{i}j}$ in \eqref{eq:codazzi} and multiply both sides by $\eta_j (=4\al_j+1)$. Adding to the resulting equation the same equation with $i$ and $j$ interchanged we arrive (after some computer assisted, but straightforward calculation) to the equation
\begin{equation}\label{eq:31zmunot0}
  m_1 \<KZ_i,Z_j\> + m_2 (\la_i-\la_j) \<KZ_i,Z_j\> + m_3 \<Z_i,Z_j\> + m_4 (\la_i-\la_j) \<Z_i,Z_j\> = 0,
\end{equation}
with the coefficients $m_a, \, a=1,2,3,4$, given by
\begin{equation}\label{eq:m_a} % divided by 6 \|V\|^2
\begin{split}
  m_1 =& 9 \|V\|^2 \|Y\|(\eta_i-\eta_j) (q -\sigma_2),\\
  m_2 =& 36 \|V\|^2 \|Y\|(s-\la_1) (\sigma_2 (\sigma_1+6)-2q),\\
  m_3 =& s (3\|V\|^2(2 \sigma_2^2 +3 \sigma_2 \sigma_1) + q( 2\sigma_1^2 - 8 \sigma_2 - 3 \|V\|^2 \sigma_1)),\\
  m_4 =& 2 (\eta_i-\eta_j) (9\|V\|^2 (s^2-\|Y\|^2-2s\la_1)\sigma_2 - q (12s^2+3\|V\|^2-12s\la_1+\sigma_1)),
\end{split}
\end{equation}
where $\sigma_1 = \eta_i + \eta_j, \; \sigma_2=\eta_i \eta_j$ and $q=27\|V\|^4\|Y\|^2(1+\mu)$. Note that computationally, it is easier to work with $\eta_i$ than with $\al_i$. The numbers $\eta_1, \eta_2, \eta_3$ are three pairwise different roots of the polynomial $p(t)=t^2(t+3)-q$; the expressions for $m_a$ in \eqref{eq:m_a} are reduced modulo $p$. Note that $\sigma_1=-3-\eta_k, \, \sigma_2=\eta_k (\eta_k+3)$, where $\{i,j,k\}=\{1,2,3\}$.

%old:
%\begin{equation}\label{eq:m_a}
%\begin{split}
%  m_1 =& 18 \|V\|^2 \|Y\|(\eta_i-\eta_j) (27 \|V\|^4 \|Y\|^2 (1+\mu) -\sigma_2),\\
%  m_2 =& -72 \|V\|^2 \|Y\|(s-\la_1) (54 \|V\|^4 \|Y\|^2 (1+\mu) -\sigma_2 (\sigma_1+6)),\\
%  m_3 =& -6 \|V\|^2 s (-2 \sigma_2^2 -3 \sigma_2 \sigma_1 + 9 \|V\|^2 \|Y\|^2 (1+\mu)( -2\sigma_1^2 + 8 \sigma_2 + 3 \|V\|^2 \sigma_1)),\\
%  m_4 =& 36 \|V\|^2 (\eta_i-\eta_j) ((1-\|V\|^2-2\|Y\|^2-2s\la_1)\sigma_2 - 3 \|V\|^2 \|Y\|^2 (1+\mu) \sigma_1 \\
%       & \quad + 9 \|V\|^2 \|Y\|^2 (1+\mu) (-4+3\|V\|^2+4\|Y\|^2+4s\la_1)),
%\end{split}
%\end{equation}

Equation \eqref{eq:31zmunot0} gives $\<(m_1 K_\mu + m_2 (K_\mu S_i-S_jK_\mu) + m_3 \id_{\gz_\mu} + m_4 (S_i-S_j))Z_i,Z_j\> = 0$, where $K_\mu$ is the restriction of $K$ to $\gz_\mu$. As both $\psi_i$ and $\psi_j$ are linear bijections, this implies $m_1 K_\mu + m_2 (K_\mu S_i-S_jK_\mu) + m_3 \id_{\gz_\mu} + m_4 (S_i-S_j)=0$ which is equivalent to $S_j(m_2 K_\mu + m_4 \id_{\gz_\mu})=m_1K_\mu+m_2K_\mu S_i+m_3\id_{\gz_\mu} + m_4S_i$. Multiplying both sides from the left by $m_2K_\mu-m_4\id_{\gz_\mu}$ we obtain a symmetric operator on the left-hand side, while on the right-hand side we get
$(m_1m_2\mu-m_3m_4)\id_{\gz_\mu} + (m_2^2 \mu -m_4^2)S_i+(m_2m_3-m_1m_4)K_\mu$. Suppose that $K_\mu \ne 0$. From \eqref{eq:m_a}, $m_2m_3-m_1m_4=0$ is a symmetric polynomial in $\eta_i, \eta_j$. Using the fact that $\sigma_1=-3-\eta_k$ and $\sigma_2=\eta_k (\eta_k+3)$, where $\{i,j,k\}=\{1,2,3\}$, and reducing modulo $p(\eta_k)=0$ we obtain $0=m_2m_3-m_1m_4=A_2\eta_k^2+A_1\eta_k + A_0$, where the coefficients $A_2, A_1, A_0$ depend only on $\|V||,\|Y\|,s, \mu$ and $A_2=q (1-3\|V\|^2) + 9(1+5\|V\|^2)(1 - \|V\|^2)$. As this is satisfied for all $k=1,2,3$ and as $\eta_1, \eta_2, \eta_3$ are pairwise different, we must have $A_2=A_1=A_0$. But $A_2 > 0$. This is obvious if $3\|V\|^2 \le 1$, and if $3\|V\|^2 > 1$, we have $q < 27 \|V\|^4(1-\|V\|^2)$, and so $A_2 >  9 (1-\|V\|^2)^2 (1+3\|V\|^2)^2>0$, a contradiction.

It remains to consider the case $K_\mu=0$. Then also $\mu = 0$. Using \eqref{eq:m_a} and the fact that $\sigma_1=-3-\eta_k, \, \sigma_2=\eta_k (\eta_k+3)$ we can write equation \eqref{eq:31zmunot0} in the following form (after multiplying by $\eta_k$ and reducing modulo $p(\eta_k)=0$):
\begin{equation}\label{eq:31zmu0}
\Phi(\eta_k) \, \id_{\gz_\mu} + (\eta_i-\eta_j) \, \Psi(\eta_k)\,  (S_i-S_j) = 0,
\end{equation}
where
\begin{equation}\label{eq:PhiPsi}
\begin{split}
\Phi(t) &= 3s ((3 \|V\|^2 + 2) t^2 + 6 (\|V\|^2 + 1) t - (9 \|V\|^2 + 2 q))\\
\Psi(t) &= 2 t^2 + 6(\|Y\|^2 -3 s^2 + 4 s \la_1) t + 18 \|V\|^2 (s^2 - 2 s \la_1 - \|Y\|^2).
\end{split}
\end{equation}
% old:
%\begin{equation}\label{eq:PhiPsi}
%\begin{split}
%\Phi(t) &= - s ((54 \|V\|^2\|Y\|^2+9) t^2 + (-81 \|V\|^4 \|Y\|^2+108 \|V\|^2\|Y\|^2+27) t\\
%&- 162 \|V\|^2\|Y\|^2 (\|V\|^2+1)), \\
%\Psi(t) &= 6 (-2 s \la_1-\|V\|^2-2\|Y\|^2+1) t^2+ 6(3 \|V\|^2\|Y\|^2-6 s\la_1\\
%&-3 \|V\|^2-6 \|Y\|^2+3)t +54 \|V\|^2\|Y\|^2 (4 s\la_1+3 \|V\|^2+4 \|Y\|^2-3).
%\end{split}
%\end{equation}
From \eqref{eq:31zmu0} we obtain that the cyclic sum of the expressions $(\eta_j-\eta_k) (\eta_k-\eta_i) \Psi(\eta_i) \Psi(\eta_j) \Phi(\eta_k)$ by $i,j,k$ is zero. As $\eta_i, \eta_j, \eta_k$ are three pairwise different roots of the equation $p(t) =0$, we obtain
\begin{equation*}
(q - 4)(s(3 \|V\|^2 -2) \la_1 + 2s^2(1-\|V\|^2))=0.
\end{equation*}
Note that $q - 4 < 0$ (as $s \ne 0$) and also $\|V\|^2 \ne \frac23$ (as otherwise $s=0$). We find $\la_1= \frac{2s(1-\|V\|^2)} {2 - 3 \|V\|^2}$. Furthermore, if $\Psi(\eta_k)=0$, then $\Phi(\eta_k)=0$ from \eqref{eq:31zmu0}, but this cannot be satisfied simultaneously with $p(\eta_k)=0$. It follows that $\Psi(\eta_k)\ne 0$, and so from \eqref{eq:31zmu0}, $S_i$ and $S_j$ commute. Multiplying both sides of \eqref{eq:31zmu0} by $S_i+S_j-H\id_{\gz_\mu}$ we obtain from \eqref{eq:Gaussinv} $\Phi(\eta_k) \, (S_i+S_j-H\id_{\gz_\mu}) + (\eta_i-\eta_j)(\al_j-\al_i) \, \Psi(\eta_k)\, \id_{\gz_\mu} = 0$. It follows that $\Phi(\eta_k) \Phi(\eta_j) \, (S_i+S_j-H\id_{\gz_\mu}) -\frac14 (\eta_i-\eta_j)^2 \, \Phi(\eta_j)\Psi(\eta_k)\, \id_{\gz_\mu} = 0$. Subtracting the same equation with $j$ and $k$ interchanged we get $\Phi(\eta_k) \Phi(\eta_j) \, (S_j-S_k) - \frac14 (\eta_i-\eta_j)^2 \, \Phi(\eta_j)\Psi(\eta_k)\, \id_{\gz_\mu} +\frac14 (\eta_i-\eta_k)^2 \, \Phi(\eta_k)\Psi(\eta_j)\, \id_{\gz_\mu}= 0$, and so by \eqref{eq:31zmu0},
\begin{equation*}
4\Phi(\eta_k) \Phi(\eta_j)\Phi(\eta_i)- (\eta_j-\eta_k) \, \Psi(\eta_i)\big( (\eta_i-\eta_k)^2 \, \Phi(\eta_k)\Psi(\eta_j)- (\eta_i-\eta_j)^2 \, \Phi(\eta_j)\Psi(\eta_k)\big)=0.
\end{equation*}
The left-hand side is symmetric in $\eta_j, \eta_k$. Using the fact that $\eta_j+\eta_k=-3-\eta_i$ and $\eta_j+\eta_k=\eta_i (\eta_i+3)$, that $p(\eta_i)=0$ and substituting the expression for $\la_1$ from the above we get $2\|V\|^4 + (2-\|V\|^2)^2 + 8\|Y\|^2(1-3\|V\|^2) =0$. But the left-hand side is easily seen to be positive when $\|V\|^2+\|Y\|^2 < 1$, a contradiction. %old 531441*v^10*y^6*(3*v^2 - 2)*(27*v^4*y^2 - 4)(-2*(v^2 + y^2 - 1)*(3*v^4 - 24*v^2*y^2 - 4*v^2 + 8*y^2 + 4))s

This completes the proof of Theorem~\ref{th:noEinDR}.

\end{document}